\definecolor{deepblue}{rgb}{0,0,0.5}
\definecolor{deepred}{rgb}{0.6,0,0}
\definecolor{deepgreen}{rgb}{0,0.5,0}
	\definecolor{DarkBlue}{rgb}{0.00,0.00,0.55}
	\definecolor{Black}{rgb}{0.00,0.00,0.00}
\newtheorem{theorem}{Theorem}[section]
\newtheorem{lemma}[theorem]{Lemma}
\theoremstyle{definition}
\newtheorem{definition}{Definition}[section]
\theoremstyle{remark}
\newtheorem{remark}{Remark}[section]
\newcommand{\TheTitle}{Multilevel quasi Monte Carlo methods for elliptic PDEs with random field coefficients via fast white noise sampling}
\newcommand{\TheAuthors}{M.~Croci, M.~B.~Giles and P.~E.~Farrell}
\title{{\TheTitle}\thanks{\textbf{Funding:} This research is supported by EPSRC grants EP/R029423/1, and by the EPSRC Centre For Doctoral Training in Industrially Focused Mathematical Modelling (EP/L015803/1).}}
\author{
  M. Croci${}^\ddagger$\thanks{Mathematical Institute, University of Oxford, Oxford, UK. (\textbf{\url{matteo.croci@maths.ox.ac.uk}}), (\textbf{\url{patrick.farrell@maths.ox.ac.uk}}), (\textbf{\url{mike.giles@maths.ox.ac.uk}}).}
  \and
  M.~B.~Giles\footnotemark[2]
  \and
  P.~E.~Farrell\footnotemark[2]
}
\DeclareMathOperator{\R}{\mathbb{R}}
\DeclareMathOperator{\N}{\mathbb{N}}
\DeclareMathOperator{\E}{\mathbb{E}}
\DeclareMathOperator{\V}{\mathbb{V}}
\DeclareMathOperator{\spn}{\text{span}}
\DeclareMathOperator{\W}{\dot{W}}
\DeclareMathOperator{\ind}{\mathds{1}}
\DeclareMathOperator{\supp}{\text{supp}}
\DeclareMathOperator{\T}{\mathcal{T}}
\DeclareMathOperator{\lell}{t}
\definecolor{myblue}{RGB}{135, 206, 250}
\begin{document}

\maketitle

\begin{abstract}
	When solving partial differential equations with random fields as coefficients the efficient sampling of random field realisations can be challenging. In this paper we focus on the fast sampling of Gaussian fields using quasi-random points in a finite element and multilevel quasi Monte Carlo (MLQMC) setting. Our method uses the SPDE approach of Lindgren et al.~combined with a new fast algorithm for white noise sampling which is taylored to (ML)QMC. We express white noise as a wavelet series expansion that we divide in two parts. The first part is sampled using quasi-random points and contains a finite number of terms in order of decaying importance to ensure good QMC convergence. The second part is a correction term which is sampled using standard pseudo-random numbers. We show how the sampling of both terms can be performed in linear time and memory complexity in the number of mesh cells via a supermesh construction, yielding an overall linear cost. Furthermore, our technique can be used to enforce the MLQMC coupling even in the case of non-nested mesh hierarchies. We demonstrate the efficacy of our method with numerical experiments.
\end{abstract}

\begin{keywords}
	Multilevel quasi Monte Carlo, white noise, non-nested meshes, Mat\'ern Gaussian fields, finite elements, partial differential equations with random coefficients, low-discrepancy sequences
\end{keywords}

%-------------------------------------------------------------------------------
%-------------------------------------------------------------------------------
\section{Introduction}

In this paper we extend the work of \cite{Croci2018} to the quasi and multilevel quasi Monte Carlo case (QMC and MLQMC respectively). We consider the solution of random elliptic partial differential equations (PDEs) in which Mat\'ern fields, sampled via the stochastic PDE (SPDE) approach \cite{Lindgren2011}, appear as coefficients. For instance, a typical problem is: find $\E[P]$, where $P(\omega)=\mathcal{P}(p)$ and $\mathcal{P}$ is a Fr\'echet differentiable functional of the function $p$ that satisfies,
\begin{align}
\label{eq:diffusion_eqn_for_QMC_conv_general}
\begin{array}{rlc}
-\nabla\cdot(F(u(\bm{x},\omega))\nabla p(\bm{x},\omega)) = f(\bm{x}), & \bm{x}\in G\subset\R^d,& \omega\in\Omega,
\end{array}
\end{align}
with suitable boundary conditions and smoothness assumptions on the function $f$ and the domain $G$. $F(x)>0$ is typically chosen in the literature to be a continuous, locally Lipschitz function (e.g.~an exponential) \cite{GrahamKuoNichols2013,KuoScheichlSchwabEtAl2017,Croci2018}. In this work, we assume that the coefficient $u(\bm{x},\omega)$ is a zero-mean Mat\'ern field approximately sampled by solving the (domain-truncated) Whittle SPDE \cite{Whittle1954, Lindgren2011}:
\begin{align}
\label{eq:white_noise_eqn}
\left(\mathcal{I} - \kappa^{-2}\Delta\right)^{k}u(\bm{x},\omega) = \eta \W,\quad \bm{x}\in D\subset\R^d,\quad \omega\in\Omega,\quad \nu = 2k - d/2 > 0,\quad \kappa = \frac{\sqrt{2\nu}}{\lambda},
\end{align}
where $\W$ is spatial Gaussian white noise in $\R^d$, defined over a suitable sample space $\Omega$. Here $k>d/4$, $d\leq 3$ and the equality has to hold almost surely (a.s.) and be interpreted in the sense of distributions. The constant $\eta>0$ is a scaling factor that depends on $\sigma$, $\lambda$ and $\nu$, cf.~\cite{Croci2018}.
In what follows we assume that $G\subset\joinrel\subset D \subset\joinrel\subset\R^d$, where by $G\subset\joinrel\subset D$ we indicate that the closure of $G$ is a compact subset of $D$, and we prescribe homogeneous Dirichlet boundary conditions on $\partial D$. If the distance between $\partial D$ and $\partial G$ is large enough, then the error introduced by truncating $\R^d$ to $D$ is negligible \cite{potsepaev2010application,Khristenko2018}.

A wide range of Gaussian field sampling methods are available in the literature, the most common being the covariance matrix factorization, the Karhunen--Lo\`eve expansion of the random field (see section 11.1 in \cite{sullivan2015introduction}), the hierarchical matrix approximation of the covariance matrix \cite{FeischlKuoSloan2018,Khoromskij2009,Hackbush2015HMatrices,DolzHarbrechtSchwab2017}, the circulant embedding method \cite{WoodChan1994,dietrich1997fast,GrahamKuoEtAl2018,BachmayrGraham2019} and the SPDE approach \cite{Whittle1954,Lindgren2011,Croci2018}. We refer to section 2.4 in \cite{CrociPHD} for a detailed overview and comparison.

In this paper, we only consider the SPDE approach, which consists of sampling a Mat\'ern field by solving equation \eqref{eq:white_noise_eqn} for a given realization of white noise. Equation \eqref{eq:white_noise_eqn} can be solved after discretization in $O(m)$ or $O(m\log(m))$ cost complexity with an optimal solver such as multigrid. This yields up to an overall $O(m)$ sampling complexity provided that the white noise term can also be sampled in linear cost. In \cite{Croci2018}, the authors have showed how white noise realizations can be sampled in $O(m)$ complexity within a finite element (FE) and non-nested MLMC framework.

The new (ML)QMC method we present in this paper is based on the efficient sampling of the white noise term in \eqref{eq:white_noise_eqn} with a hybrid quasi/pseudo-random sequence. This technique does not require structured grids and is especially designed to work with non-nested mesh hierarchies. While some sampling techniques (e.g.~Karhunen--Lo\`eve expansion or covariance matrix factorization) are easily extended to the non-nested case, the SPDE approach requires more careful handling \cite{Croci2018,Osborn2017}. To the best of our knowledge this work is the first in which the SPDE approach is employed in a non-nested MLQMC framework.

For QMC applications it is extremely important for the QMC integrand to have low effective dimensionality and to order the QMC integrand variables in order of decaying importance \cite{CaflishMorokoffOwen}. For this reason, a common approach in the existing literature about MLQMC methods for elliptic PDEs is the expansion of the random field coefficients as an infinite series of basis functions of $L^2(D)$ that naturally exposes the leading order dimensions in the integrands \cite{KuoScheichlSchwabEtAl2017,KuoSchwabSloan2015,GrahamKuoNichols2013,DickKuoSloan2013}.

When using the SPDE approach and equation \eqref{eq:white_noise_eqn}, the only source of randomness is white noise and we therefore must expand $\W$ to achieve the required variable ordering. In this paper, we use a wavelet expansion of $\W$. Wavelets in general form a multi-resolution orthogonal basis of $L^2(D)$ and are commonly employed within QMC algorithms as their hierarchical structure exposes the leading order dimensions in the integrands while allowing fast $O(m)$ or $O(m\log m)$ complexity operations (depending on the wavelet basis \cite{Daubechies1988}). A classical example on the efficacy of wavelet expansions of white noise (in time) within a QMC method is offered by the L\'evy--Ciesielski (or Brownian bridge) construction of Brownian motion. Ubiquitous in mathematical finance, it is commonly used to solve stochastic differential equations with QMC \cite{glasserman2013,GilesWaterhouse2009}. Inspired by this technique, we choose to expand white noise into a Haar wavelet expansion\footnote{Note that the hat functions used in the L\'evy--Ciesielski construction are piecewise linear wavelets, their derivatives are Haar wavelets and white noise in time is the derivative of Brownian motion.}.

In a MLQMC framework, wavelets are used by Kuo et al.~to sample random fields efficiently, yielding a cost per sample of $O(m \log m)$ using nested grids \cite{KuoSchwabSloan2015}. In \cite{HerrmannSchwab2017}, Herrmann and Schwab use a truncated wavelet expansion of white noise to sample Gaussian fields with the SPDE approach within a nested MLQMC hierarchy. Their work is possibly the closest to ours as they also work with the SPDE approach to Mat\'ern field sampling and use a wavelet expansion of white noise \cite{HerrmannSchwab2017}. All these randomized MLQMC methods for elliptic PDEs use randomly shifted lattice rules and derive MLQMC complexity bounds using a pure QMC approach, truncated expansions and nested hierarchies.

%While the theory in \cite{HerrmannSchwab2017} extends to general non-convex polyhedra, the only numerical experiment performed in \cite{HerrmannSchwab2017} is on nested uniform grids of the unit interval.

Our work differs from the above-mentioned papers as it specifically focuses on computational aspects. Firstly, we do not derive any MLQMC complexity estimates, but we design our method to work in the general case in which the multilevel hierarchy is non-nested. Secondly, our objective is to sample white noise exactly in linear (or log-linear) cost complexity within a (ML)QMC framework, similarly as for MLMC in \cite{Croci2018}. For this reason, we handle the expansion differently: we do not just truncate it, but we work with the whole infinite expansion of white noise by adding a correction term to the truncation. The truncation term is finite-dimensional and we sample it with a randomized QMC point sequence; the correction term is infinite-dimensional and a QMC approach is not feasible. However, the covariance of the correction is known and we can sample it using pseudo-random numbers with an extension of the technique presented in \cite{Croci2018}.

The advantage of our hybrid QMC/MC approach is that we can sample white noise exactly, independently from the truncation level and the wavelet degree considered (e.g.~while we use Haar wavelets, Herrmann and Schwab in \cite{HerrmannSchwab2017} consider higher degree wavelets), without introducing any additional bias into the MLQMC estimate. In contrast, in the aforementioned MLQMC algorithms the expansion must be truncated after enough terms to make the truncation error negligible. Naturally, this advantage comes at a price: since we are using pseudo-random numbers as well, the asymptotic convergence rate of our method with respect to the number of samples $N$ is still the standard MC rate of $O(N^{-1/2})$. Nevertheless, we show that large computational gains can be recovered in practice in a pre-asymptotic QMC-like regime in which the convergence rate is $O(N^{-\chi})$, $\chi \geq 1/2$, and we derive a partial convergence result (cf.~supplementary material) that explains this behaviour in the QMC case.

Wavelet spaces are typically non-nested within FE spaces and vice-versa, and a transfer operation (i.e.~interpolation, projection or quadrature) is necessarily needed somewhere along the Gaussian field sampling pipeline. While this issue is not mentioned in related work on MLQMC \cite{HerrmannSchwab2017}, transfer operations are often performed at the end of the sampling process in non-nested MLMC (see e.g.~\cite{GrahamKuoNuyensCirculantEmbeddingUnstructured2018,Osborn2017scalable}): the field is first sampled on a usually uniform and structured sampling mesh before transferring it onto the target FE grids. While sampling on a uniform structured grid has its appeal, this approach also has two disadvantages. First, when working with complex geometries and graded meshes, it is desirable for the sampled Mat\'ern field to have the same local accuracy as the solution of the PDE of interest (e.g.~\eqref{eq:diffusion_eqn_for_QMC_conv_general}). Uniform grids must be refined everywhere to satisfy this requirement. Second, the non-nested random field transfer could introduce additional bias, which may be asymptotically larger in some cases. For instance, a piecewise polynomial defined on a uniform grid is not globally differentiable and cannot be accurately integrated or interpolated on a non-nested grid without resorting to more sophisticated techniques.

Our approach solves both issues by directly sampling coupled white noise realizations exactly on each pair of non-nested FE subspaces in the hierarchy so that all the subsequent computations (solution of the SPDE and the PDE of interest, evaluation of the output functional) can be performed non-intrusively and in parallel without any further inter-level communication. This is done similarly as in our previous work \cite{Croci2018}, but with the additional feature that the white noise samples are QMC samples obtained via a wavelet expansion. For this sampling operation we adopt the embedded mesh technique by Osborn et al.~\cite{Osborn2017} so that in the MLQMC hierarchy each mesh of $G$ is nested within the corresponding mesh of $D$ and we deal with the non-nestedness of the FE and wavelet spaces via a supermesh construction. The latter is key in our algorithm for drawing exact white noise samples (over the FE spaces). In the independent white noise realization case we construct a two-way supermesh between the graded FE mesh of interest and a uniform ``wavelet'' mesh. In the MLQMC coupled realization case, we construct a three-way supermesh between the two non-nested FE meshes and the ``wavelet'' mesh. If all meshes involved are nested, a supermesh construction is not required. Owing to the embedded mesh technique and the drawing of exact white noise samples we avoid all non-nested transfer operations and any additional bias.

It is worth noting that the QMC and MLQMC methods presented are especially designed to work with non-nested grid hierarchies and that the use of a supermesh construction is merely a requirement of this framework rather than a specific feature of our algorithms. In fact, a supermesh construction might be unavoidable even when the field is sampled via an alternative method such as e.g.~circulant embedding \cite{dietrich1997fast}, since this is needed to integrate the sampled field exactly on the target non-nested unstructured grid (see \cite{CharrierMLMC2013} for an analyis of quadrature error). For low-smoothness fields ($\nu\leq 1$), Graham et al.~have proved that there is no loss in the convergence rate due to non-nested interpolation \cite{GrahamKuoNuyensCirculantEmbeddingUnstructured2018}. However, this operation, albeit faster, still introduces extra bias and harms convergence when smoother fields are used. 

We remark that the QMC method presented in this paper has been successfully applied in \cite{CrociVinjeRognes2019bis} on an unstructured MRI-derived 3D brain mesh to quantify the uncertainty in models describing brain fluid and solute movement.

This paper is structured as follows: in section \ref{sec:background} we summarize the mathematical background needed to understand the rest of the paper. In section \ref{sec:Haar_wavelet_expansion_of_white_noise} we introduce the Haar wavelet expansion of white noise and its splitting into a truncated term and a correction term. In section \ref{sec:QMC_sample_white_noise} we introduce our sampling technique for independent white noise realizations. We extend the white noise sampling method to MLQMC in section \ref{sec:coupled_realizations_MLQMC}, where we show how coupled white noise realizations can be sampled efficiently. The algorithms are supported by numerical results, which we present and discuss in section \ref{sec:MLQMC_num_res}. We conclude the paper with a brief summary of the methods and results presented in section \ref{sec:MLQMC_conclusions}.

\section{Notation and background}
\label{sec:background}

\subsection{Notation}

In this paper we denote with $L^2(D)$ the space of square-integrable functions over $D$ and with $(\cdot, \cdot)$ the standard $L^2(D)$ inner product. We furthermore indicate with $W^{k,q}(D)$ the standard Sobolev space of integrability order $q$ and differentiability $k$, with $H^k(D) \equiv W^{k,2}(D)$ and with $H^1_0(D)$ the space of $H^1(D)$ functions that vanish on $\partial D$ in the sense of traces.

Given a sample space $\Omega$, we indicate with $L^2(\Omega, \R)$ the space of real-valued \emph{random variables} with finite second moment.

For a given Banach space $U$ over $D$, we indicate with $L^2(\Omega, U)$ the space of \emph{random fields} $u(\bm{x}, \omega)$, $\bm{x}\in D$, $\omega\in\Omega$ such that $u(\bm{x}, \cdot)\in L^2(\Omega, \R)$ for almost every $\bm{x}\in D$ and $u(\cdot, \omega)\in U$ almost surely. If the $u(\bm{x},\cdot)$ are jointly Gaussian for almost every $\bm{x}\in D$, then the random field is a \emph{Gaussian field} and it is uniquely determined by its mean $\mu(\bm{x})$ and covariance $C(\bm{x},\bm{y})$ function. Throughout this paper we will consider only zero-mean fields for simplicity.

A Gaussian field is also a \emph{Mat\'ern field} if its covariance is of the Mat\'ern class, i.e.
\begin{align}
\label{eq:Matern}
C(\bm{x},\bm{y}) := \dfrac{\sigma^2}{2^{\nu-1}\Gamma(\nu)}(\kappa r)^\nu \mathcal{K}_\nu(\kappa r),\ \ r:=\Vert \bm{x}-\bm{y}\Vert _2,\ \ \kappa := \frac{\sqrt{2\nu}}{\lambda},\ \ \bm{x},\bm{y}\in D,
\end{align}
where $\sigma^2$, $\nu$, $\lambda > 0$ are the variance, smoothness parameter and correlation length of the field respectively, $\Gamma(x)$ is the Euler Gamma function and $\mathcal{K}_\nu$ is the modified Bessel function of the second kind.

In this paper we will adopt the following definition of \emph{generalized random field}, first introduced by It\^{o} \cite{Ito1954} and extended by Inaba and Tapley \cite{Inaba1975}. For a given Banach space $U$, we denote with ${\mathcal{L}(U, L^2(\Omega, \R))}$ the space of generalized random fields that are continuous linear mappings from $U$ to $L^2(\Omega,\R)$. For a given $\xi\in \mathcal{L}(U, L^2(\Omega, \R))$ we indicate the action (or pairing) of $\xi$ onto a function $\phi\in U$ with the notation $\xi(\phi) := \langle \xi, \phi \rangle$.

Possibly the most commonly used generalized random field is Gaussian \textit{white noise}. White noise is defined as follows.
\begin{definition}[White noise, see example 1.2 and lemma 1.10 in \cite{Hida1993}]
	\label{def:white_noise}
	Let $D\subseteq\R^d$ be an open domain. The white noise $\W\in \mathcal{L}(L^2(D), L^2(\Omega, \R))$ is a generalized stochastic field such that for any collection of $L^2(D)$ functions $\{\phi_i\}$, if we let $b_i := \langle \W, \phi_i \rangle$, then $\{b_i\}$ are joint Gaussian random variables with zero mean and covariance given by $\E[b_ib_j]=(\phi_i,\phi_j)$.
\end{definition}

When working with the FEM, it is useful to define the $L^2$ projection of white noise onto a FE space and its properties:
\begin{definition}[$L^2$ projection of white noise, definition 3.2 in \cite{CrociPHD}]
	Let $D_h$ be a FE mesh of an open domain $D\subseteq\R^d$ and let $U_h\subseteq L^2(D)$ be a FE subspace defined over $D_h$. Let $P_h$ be the $L^2$ projection onto $U_h$. The $L^2$ projection of white noise $P_h\W\in \mathcal{L}(L^2(D), L^2(\Omega, \R))$ is the generalized random field that satisfies
	\begin{align}
	\langle P_h\W,v\rangle:=\W(P_hv)\equiv \langle \W, P_hv\rangle,\quad\forall v\in L^2(D).
	\end{align}
\end{definition}
\begin{lemma}
	If $\text{dim}(U_h)<\infty$, then $P_h\W$ is a ``proper'' Gaussian field in $L^2(\Omega,U_h)\subseteq L^2(\Omega, L^2(D))$ and $\langle P_h\W,v\rangle\equiv (P_h\W,v)$ for all $v\in L^2(D)$. Furthermore,
	\begin{align}
	(P_h\W,v_h) =  \langle \W, v_h\rangle,\quad\forall v_h\in U_h,\quad\quad (P_h\W,v^\perp) = 0,\quad \forall v^\perp\in U_h^\perp,
	\end{align}
	i.e.~$\W_h$ coincides with $\W$ over $U_h$ and is zero on $U_h^\perp$.
\end{lemma}
\begin{proof}
	%The fact that $P_h\W$ is a Gaussian field in $L^2(\Omega, L^2(D))$ for finite-dimensional $U_h$ was proved in lemma 3.5 in \cite{CrociPHD}.	
	Every $v\in L^2(D)$ admits the decomposition $v = v_h + v^\perp$, where $v_h\in U_h$ and $v^\perp \in U_h^\perp$. Given two orthonormal bases $\{\phi_i\}_{i=1}^\infty$ of $U_h$ and $\{\phi^\perp_i\}_{i=1}^\infty$ of $U_h^\perp$, we can then expand the action of white noise against any $v\in L^2(D)$ as the series
	\begin{align}
	\langle \W, v\rangle = \sum_{i=1}^n a_i(\phi_i,v) + \sum_{i=1}^\infty a_i^\perp (\phi_i^\perp,v),\quad\text{where}\quad a_i = \langle \W, \phi_i\rangle,\quad a_i^\perp=\langle \W, \phi_i^\perp\rangle.
	\end{align}	
	By defining $\W_h:=\sum_{i=1}^na_i\phi_i\in L^2(\Omega,U_h)$, it is then readily shown that $\W_h\equiv P_h\W$ in $L^2(\Omega,L^2(D))$ since $(\W_h,v^\perp)=0$ a.s.~for all $v^\perp\in U_h^\perp$, and therefore
	\begin{align}
	\langle P_h\W, v\rangle := \langle \W, P_hv\rangle = \sum_{i=1}^n a_i(\phi_i,P_hv)=(\W_h,v),\quad\text{a.s.},\quad\forall v\in L^2(D).
	\end{align}
	The remaining part of the thesis readily follows.
\end{proof}

\subsection{Randomized quasi Monte Carlo}

Quasi Monte Carlo (QMC) methods retain most of the advantages of standard Monte Carlo (MC) while improving the convergence order with respect to the number of samples. At the heart of QMC for estimating expectations is the reinterpretation/approximation of the expected value as an integral with respect to the uniform distribution over the unit hypercube. This integral is then approximated with an $N$-point quadrature rule with equal weights:
\begin{align}
\label{eq:QMC_intro}
\E[P] = \int\limits_\Omega P(\omega) \text{d}\mathbb{P}(\omega) \approx \int\limits_{[0,1]^s}Y(\bm{x})\text{d}\bm{x} =: I \approx \frac{1}{N}\sum\limits_{n=1}^NY(\bm{x}_n) =: I_N,
\end{align}
for some suitable function $Y$ obtained by mapping the integration domain to the unit hypercube (cf.~section 2.11 in \cite{DickKuoSloan2013}). Here the first approximation sign indicates that there might be an additional error due to dimension truncation in case the integrand is infinite-dimensional. The second approximation sign simply represent the actual QMC approximation. Here $\mathbb{P}$ is the probability measure of a suitable probability space $(\Omega, \mathcal{A}, \mathbb{P})$, with $\mathcal{A}$ as its $\sigma$-algebra. The $\bm{x}_n\in[0,1]^s$ are, unlike in the standard MC case, not chosen at random, but chosen carefully and in a deterministic way so as to minimise the quadrature error. However, these deterministic points often yield error bounds which are difficult to compute. For this reason QMC points are typically randomized to allow a more practical statistical error estimation \cite{Lemieux2009,Owen2003}. In randomized QMC, $M$ independent randomized sequences $\{\{\hat{\bm{x}}_{n,m}\}_{n=1}^N\}_{m=1}^{M}$ are used to obtain the estimator
\begin{align}
\hat{I}_{N,M}(\omega) := \frac{1}{M}\sum\limits_{m=1}^M I^m_N(\omega) = \frac{1}{M}\sum\limits_{m=1}^M \left(\frac{1}{N}\sum\limits_{n=1}^NY(\hat{\bm{x}}_{n,m}(\omega))\right).
\end{align}
Since $I^m_N(\omega)$ is now a random variable, provided that $M$ is large enough a confidence interval can be estimated and we retain a practical error measure as in the standard MC case. In this work we use $M=32$ unless otherwise stated.

QMC methods achieve faster-than-MC convergence rates by carefully choosing the point sequence $\{\bm{x}_n\}_{n=1}^{N}$ and its randomization. For a detailed overview of QMC integration theory we refer the reader to the excellent review article by Dick et al.~\cite{DickKuoSloan2013}. 

According to both classical \cite{Lemieux2009,CaflishMorokoffOwen} and modern \cite{KuoSchwabSloan2015,GrahamKuoNuyens2011,GrahamKuoNuyensCirculantEmbeddingUnstructured2018,KuoScheichlSchwabEtAl2017,HerrmannSchwab2017} QMC theory, for practical applications of high-dimensional QMC integration it is extremely important to order the integration variables in order of decaying importance and/or possibly reduce the dimensionality of the integrand. This will be a key aspect in the methods we present in this paper.

\subsection{Multilevel Monte Carlo methods}

The multilevel Monte Carlo method was first introduced by Heinrich in \cite{Heinrich2001} for parametric integration and popularised by Giles for stochastic differential equations in \cite{giles2008}. Multilevel quasi Monte Carlo, originally presented in \cite{GilesWaterhouse2009}, combines QMC and MLMC together with the objective of combining their advantages. Assume that it is possible to compute realizations of $P(\omega)$ at different accuracy levels $P_\ell(\omega)$ for $\ell=1,\dots,L$ of increasing accuracy and computational cost, and that the approximation of $P$ on the finest level, $P_L$ is accurate enough. Multilevel methods estimate $\E[P_L]$ through the telescopic sum,
\vspace{-6pt}
\begin{align}
\label{eq:telescoping}
\E[P] \approx \E[P_L] = \sum\limits_{\ell = 1}^L\E[P_\ell - P_{\ell-1}],
\end{align}
\vspace{-9pt}\\
where $P_{0} \equiv 0$. For example, if samples of $P$ are obtained by soving \eqref{eq:diffusion_eqn_for_QMC_conv_general} with the finite element method (FEM), the levels of accuracy can be defined by using a hierarchy of meshes ($h$-refinement) or by increasing the polynomial degree of the finite element bases used ($p$-refinement).

The MLMC and MLQMC estimators are then obtained from \eqref{eq:telescoping} by approximating each term in the sum with standard MC or randomized QMC respectively. For MLMC we have,
\begin{align}
\E[P_\ell - P_{\ell-1}] \approx \frac{1}{N_\ell}\sum\limits_{n=1}^{N_\ell}(P_\ell - P_{\ell  - 1})(\omega^n_\ell),
\end{align}
in which each $P_\ell - P_{\ell-1}$ sample is coupled in the sense that the samples of $P_\ell$ and $P_{\ell-1}$ share the same event $\omega_\ell^n$. Ensuring this coupling is respected is essential for any MLMC algorithm since this coupling is the reason behind the increased efficiency of MLMC with respect to standard MC \cite{giles2008}.

Enforcing the same type of coupling is also essential for MLQMC. In the MLQMC case each term in \eqref{eq:telescoping} is approximated with randomized QMC as follows,
\begin{align}
\label{eq:mlqmc_level_estimator}
\E[P_\ell-P_{\ell-1}] \approx \int\limits_{[0,1]^{s_\ell}}Y_\ell(\bm{x})\text{d}\bm{x}\approx \frac{1}{M}\sum\limits_{m=1}^M \left(\frac{1}{N_\ell}\sum\limits_{n=1}^{N_\ell}Y_\ell(\hat{\bm{x}}_{n,m}^\ell)\right)=:\frac{1}{M}\sum\limits_{m=1}^MI_{N_\ell}^{m,\ell}(\omega),
\end{align}
where the meaning of each variable is the same as in the QMC case. We now have a hierarchy of integrands $\{Y_\ell\}_{\ell=1}^L$ and of randomized QMC point sequences $\{\{\{\hat{\bm{x}}_{n,m}^\ell\}_{n=1}^N\}_{m=1}^M\}_{\ell=1}^{L}$ of dimensions $\{s_\ell\}_{\ell=1}^L$. Note that, in the same way as for QMC, MLQMC still requires for good performance that the integration variables on each level are organized in order of decaying importance and possibly that the integrands $Y_\ell$ have low effective dimensionality.

The theory for MLMC is by now established \cite{giles2008,Cliffe2011,TeckentrupMLMC2013}, yielding formulas for the optimal number of samples $N_\ell$ on each level and for the total MLMC algorithm complexity ($O(\varepsilon^{-2})$ in the best case scenario, see supplementary material \ref{secSM:multilevel_methods}). On the other hand, obtaining a reliable estimate of the QMC convergence rate with respect to $N_\ell$ for MLQMC is particularly hard, to the extent that theoretical results are only available for a few specific problems and specific QMC point sequences \cite{KuoScheichlSchwabEtAl2017,HerrmannSchwab2017}. For this reason, setting up an optimal MLQMC hierarchy with the optimal values of the $N_\ell$ is a challenging task. However, in the best possible case where we get a $O(N^{-\chi})$, $1/2 \leq \chi \leq 1$, QMC rate for each term in the telescoping sum, the benefits of MLMC and QMC can accumulate yielding a total MLQMC computational cost of $O(\varepsilon^{-1/\chi})$ for a given MSE tolerance of $\varepsilon^2$ \cite{HerrmannSchwab2017}. In this case MLQMC significantly outperforms all other Monte Carlo methods.

In this paper we employ the original MLQMC algorithm from \cite{GilesWaterhouse2009} as it does not require the convergence rate with respect to $N$ to be known \emph{a priori}. We refer to the supplementary material \ref{secSM:multilevel_methods} for a description of the algorithm.

\subsection{Supermeshes}

We now introduce the concepts of non-nested tessellations/meshes and of a supermesh.

\begin{figure}[h!]
	%\vspace{-6pt}
	\centering
	\includegraphics[trim=0cm 0cm 0cm 0cm, clip=true, width=.7\textwidth]{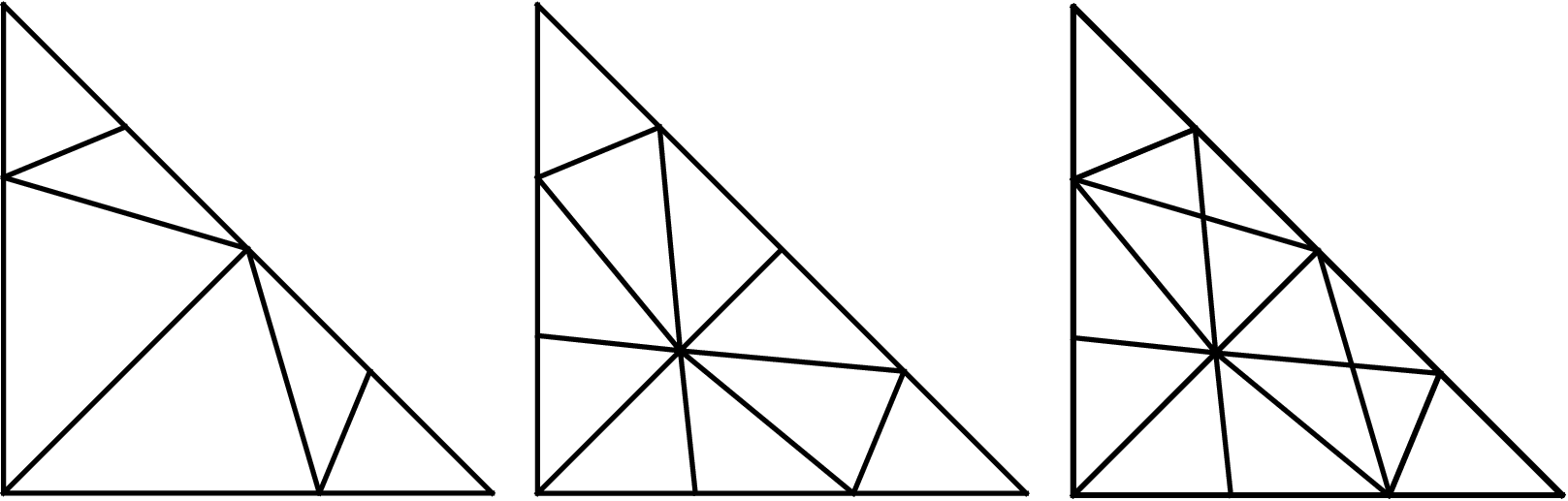}
	\centering
	\caption{\textit{An example of a supermesh construction. The first two meshes on the left are the parent meshes and the mesh on the right is one of their supermeshes.}}
	\label{fig:supermesh}
	\vspace{-12pt}
\end{figure}

Let $T_a$ and $T_b$ be two tessellations of $D$. We say that $T_a$ is \emph{nested} within $T_b$ if $\textnormal{vertices}(T_a)\subseteq \textnormal{vertices}(T_b)$ and if for each element $e\in T_a$ there exists a set of elements $E\subseteq T_b$ such that $e=\bigcup_{\hat{e}_i\in E}\hat{e}_i$. We say that $T_a$ and $T_b$ are \emph{non-nested} if $T_a$ is not nested within $T_b$ and vice-versa.

To enforce the multilevel (quasi) Monte Carlo coupling across a non-nested mesh hierarchy we use a crucial ingredient: a supermesh construction. Supermeshes are commonly used e.g.~within adaptive discretizations or geometric multigrid algorithms, to transfer discrete fields between non-nested meshes \cite{Farrell2009Supermesh}. A supermesh is defined as follows.

\begin{definition}[Supermesh, \cite{Farrell2011Supermesh, Farrell2009Supermesh}]
	Let $D\subset\joinrel\subset\R^d$ be an open domain and let $\T_a$, $\T_b$ be two tessellations of $D$. A supermesh $S_h$ of $\T_a$ and $\T_b$ is a common refinement of $\T_a$ and $\T_b$. More specifically, $S_h$ is a triangulation of $D$ such that:
	\begin{enumerate}[leftmargin=1cm]
		\item $\textnormal{vertices}(\T_a) \cup \textnormal{vertices}(\T_b) \subseteq \textnormal{vertices}(S_h)$,
		\item $\textnormal{measure}(e_S \cap e) \in \{0, \textnormal{measure}(e_S)\}$ for all cells $e_S\in S_h$, $e\in (\T_a \cup \T_b)$.
	\end{enumerate}
\end{definition}

The first condition means that every parent mesh vertex must also be a vertex of the supermesh, while the second states that every supermesh cell is completely contained within exactly one cell of either parent mesh \cite{Farrell2009Supermesh}. The supermesh construction is not unique \cite{Farrell2009Supermesh}. We show an example of supermesh construction in figure \ref{fig:supermesh}. Efficient algorithms for computing the supermesh are available \cite{libsupermesh-tech-report}.

It was shown in \cite{quasi-uniform-supermeshing} that if the parent tesselations $T_a$ and $T_b$ are quasi-uniform (cf.~definition 4.4.13 in \cite{brenner2007mathematical}), then the number of cells of a supermesh constructed via a local supermeshing algorithm (cf.~\cite{PatrickPHD}) is linear in the number of cells of $T_a$ and $T_b$.

\subsection{Fast white noise sampling over finite element spaces}
\label{subsec:fast_WN_sampling}
When working with Monte Carlo methods and the SPDE approach, it is essential to draw white noise samples efficiently. In practice, it is typically sufficient to sample realizations of $P_h\W$ since $P_h\W$ coincides with $\W$ over the FE space $U_h=\spn\{\phi_1,\dots,\phi_n\}$. In turn, samples of $P_h\W$ can be obtained by drawing realizations of the Gaussian vector $\bm{b}\in L^2(\Omega,\R^n)$ with $b_i:=(P_h\W, \phi_i)=\langle \W, \phi_i \rangle$, i.e.~the action of $P_h\W$ over the basis functions of $U_h$. The vector $\bm{b}$ has covariance matrix $M$ where $M_{ij}=(\phi_i,\phi_j)$ (cf.~definition \ref{def:white_noise}). Drawing realizations of $\bm{b}$ can be expensive since it typically involves the factorization of $M$ (the mass matrix over $U_h$) that has up to cubic cost complexity. In \cite{Croci2018}, we presented a linear (optimal) cost complexity sampling algorithm which we use in this paper and we now briefly summarize (see section 4.1 in \cite{Croci2018} for further details).

We first define the auxiliary space $\dot{U}_h$ by restricting the support of the basis functions of $U_h$ onto $\{e_k\}_{k=1}^m$, the cells of $D_h$. Namely, we define $\dot{U}_h:=\sum_{k=1}^mU_h^b|_{e_k}$ where $U_h^b|_{e_k}=\spn\{\phi_1|_{e_k},\dots,\phi_n|_{e_k}\}$. We indicate with $\{\psi_i\}_{i=1}^{\bar{n}}$ the basis functions of $\dot{U}_h$. Note that by construction $U_h\subseteq \dot{U}_h\subseteq L^2(D)$. After letting $P_h$ and $\dot{P}_h$ be the $L^2$ projections onto $U_h$ and $\dot{U}_h$ respectively, the technique in \cite{Croci2018} essentially samples $P_h\W$ as $P_h\W = P_h\dot{P}_h\W$ which is done by first sampling $\dot{P}_h\W$ and then projecting it onto $U_h$. Both operations can be performed in linear cost. Let $\bm{a}\in L^2(\Omega, \R^{\bar{n}})$, $\bm{b}\in L^2(\Omega,\R^n)$ be Gaussian vectors such that $a_i=(\dot{P}_h\W,\psi_i)$ and $b_i=(P_h\W,\phi_i)$. The sampling of $\bm{a}$ has linear cost complexity in $m$ and is trivially parallelizable since its covariance matrix is the mass matrix over $\dot{U}_h$, which is block-diagonal (it has $m$ blocks of small size) and can be factorized efficiently. We can then obtain $\bm{b}$ from $\bm{a}$ in linear cost since every basis function of $U_h$ can be written as a small sum of the basis functions of $\dot{U}_h$, and therefore there exists a sparse boolean matrix $L\in\R^{n\times\bar{n}}$ such that $\bm{b}=L\bm{a}$ almost surely. As an example, if we split a $\phi_i\in U_h$ into $\phi_i=\phi_i|_{e_j}+\phi_i|_{e_k}$, then $\phi_i|_{e_j},\phi_i|_{e_k}\in\dot{U}_h$ and
\begin{align}
(P_h\W,\phi_i)=\langle\W,\phi_i\rangle = \langle\W,\phi_i|_{e_j}\rangle + \langle\W,\phi_i|_{e_k}\rangle = (\dot{P}_h\W,\phi_i|_{e_j}) + (\dot{P}_h\W,\phi_i|_{e_k}).
\end{align}
The matrix $L$ is called an assembly matrix and its action encodes the assembly procedure performed by any FE code to build global matrices and vectors from local cell contributions \cite{Wathen1987}. Once a sample of $\bm{b}$ has been drawn, the coefficients in the FE expansion of $P_h\W$ are given by $M^{-1}\bm{b}$. Inverting the mass matrix is still a linear cost operation, although it is not needed in the SPDE approach which only requires samples of $\bm{b}$.

In a MLMC framework, we must couple samples of white noise over two different, possibly non-nested FE spaces $V^\ell$ and $V^{\ell-1}$. The technique just described can be extended so that coupled white noise samples can also be drawn in linear cost. The idea is to set $U_h=V^\ell + V^{\ell-1}$ and use this technique to sample white noise over $U_h$. Since $V^\ell,V^{\ell-1}\subseteq U_h$ one can then easily transfer the result over $V^\ell$ and $V^{\ell-1}$, and obtain coupled samples. The main complication here is that to construct the auxiliary space $\dot{U}_h$ now a supermesh construction is required: by splitting the support of all basis functions of $U_h$ across the supermesh cells we can ensure that the mass matrix over the space $\dot{U}_h$ still has a block-diagonal structure and can thus be factorized efficiently. We refer to section 4.2 in \cite{Croci2018} for further details.

\section{Haar wavelet expansion of spatial white noise}
\label{sec:Haar_wavelet_expansion_of_white_noise}

For good QMC convergence we need to order the dimensions of the QMC integrand in order of decaying importance so that the largest error components are on the leading dimensions \cite{glasserman2013,DickKuoSloan2013}. In what follows we expand white noise into a Haar wavelet series so that the hierarchical structure of Haar wavelets can naturally provide the variable ordering needed for QMC integration.

We start by briefly introducing the Haar wavelet basis, see e.g.~\cite{Daubechies1988}. Let $\ind_A(x)$ be the indicator function of a set $A$, let $\bar{\N} := \{-1\} \cup \N$ and let $x^+ := \max(x,0)$. The Haar wavelets $H_{l,n}$ for $l\in\bar{\N}$, $n=0,\dots,2^{l^+}-1$ are obtained from the mother wavelet $\Psi(x) := \ind_{[0,1/2)}(x) - \ind_{[1/2,1)}(x)$ through shifting and rescaling as $H_{l,n}:=2^{l^+/2}\Psi(2^l x-n)$. These wavelets have support of size $|\supp(H_{l,n})| = 2^{-l^+}$ and form an orthonormal basis of $L^2(0,1)$. The Haar system can be generalized to higher dimensions by taking the tensor product of the 1D Haar basis with itself. In the higher dimensional case we then have vector-indexed $d$-dimensional Haar wavelets $H_{\bm{l},\bm{n}}(\bm{x}) := \bigotimes_{i=1}^d H_{l_i, n_i}(x_i)$ ,where $\bm{l} \in \bar{\N}^d$ and $\bm{n}\in\N$. The $d$-dimensional Haar wavelets have support size $|\supp(H_{\bm{l},\bm{n}})|=\prod_{i=1}^{d} 2^{-l_i^+} = 2^{-||\bm{l}^+||_1}$ and they form an orthonormal basis of $L^2((0,1)^d)$. Generalizing the Haar basis to generic boxed domains is straight-forward.

\begin{remark}[On the non-standard Haar wavelet basis]
	\label{rem:compactly_supported_Haar_basis}
	The $d$-dimensional wavelets just introduced are sometimes called the \emph{standard} Haar basis, which leads to log-linear complexity operations (rather than just linear) for $d>1$. The algorithms we introduce in this paper also work for the \emph{non-standard} Haar basis, which supports linear complexity operations in all dimensions \cite{Daubechies1988,Beylkin1992}.
\end{remark}

Let $z_{\bm{l},\bm{n}}(\omega)$ be i.i.d.~standard normal random variables. Furthermore, let $|\bm{l}| := \max_i(l_i)$. We can express white noise over $[0,1]^d$ as a Haar wavelet expansion,
\begin{align}
\label{eq:Haar_expansion_full}
\W = \sum\limits_{|\bm{l}|=-1}^{\infty}\sum\limits_{\bm{n}=\bm{0}}^{2^{\bm{l}^+}-\bm{1}}z_{\bm{l},\bm{n}}(\omega)H_{\bm{l},\bm{n}}(\bm{x}),
\end{align}
where $z_{\bm{l},\bm{n}}:=\langle\W,H_{\bm{l},\bm{n}}\rangle$ for all $\bm{l}$, $\bm{n}$, making the $z_{\bm{l},\bm{n}}$ be i.i.d.~standard Gaussian random variables. The second summation is to be interpreted as the sum over all $\bm{n}$ with components $n_i$ such that $0\leq n_i \leq 2^{l^+}_i - 1$ for all $i$. Let $\mathscr{L}\in\bar{\N}$. We now divide the series in two terms,
\begin{align}
\label{eq:wavelet_QMC_white_noise}
\W = \W_\mathscr{L} + \W_R = \sum\limits_{|\bm{l}|=-1}^{|\bm{l}|=\mathscr{L}}\sum\limits_{\bm{n}=\bm{0}}^{2^{\bm{l}^+}-\bm{1}}z_{\bm{l},\bm{n}}(\omega)H_{\bm{l},\bm{n}}(\bm{x}) + \sum\limits_{|\bm{l}|=\mathscr{L}+1}^{\infty}\sum\limits_{\bm{n}=\bm{0}}^{2^{\bm{l}^+}-\bm{1}}z_{\bm{l},\bm{n}}(\omega)H_{\bm{l},\bm{n}}(\bm{x}).
\end{align}
Here the term $\W_\mathscr{L}$ is a ``proper'' stochastic field in $L^2(\Omega, L^2(D))$, while $\W$ and $\W_R$ are generalised stochastic fields with actions defined for all $v\in L^2(D)$ respectively by
\begin{align}
\label{eq:W_R_typeI}
\langle\W,v\rangle = \sum\limits_{|\bm{l}|=-1}^{\infty}\sum\limits_{\bm{n}=\bm{0}}^{(2^{\bm{l}}-\bm{1})^+}z_{\bm{l},\bm{n}}(H_{\bm{l},\bm{n}},v),\quad\text{and}\quad\langle\W_R,v \rangle = \sum\limits_{|\bm{l}|=\mathscr{L}+1}^{\infty}\sum\limits_{\bm{n}=\bm{0}}^{(2^{\bm{l}}-\bm{1})^+}z_{\bm{l},\bm{n}}(H_{\bm{l},\bm{n}},v).
\end{align}

The idea is then to sample the Gaussian variables in the expression for $\W_\mathscr{L}$ by using a hybrid QMC/MC combination of quasi-random (e.g.~Sobol) and pseudo-random numbers, and to sample $\W_R$ with pseudo-random numbers only by extending the work in \cite{Croci2018}.

The reasoning behind this splitting is that it is important to keep the dimensionality of the QMC point sequence relatively low: first, as we will see in the next section, the sampling of $\W$ expressed this way requires a supermesh construction and smaller dimensions imply faster $\W$ samples; second, some QMC point sequences cannot readily or efficiently be sampled in high dimensions\footnote{For example, the state-of-the-art Sobol' sequence generator, Broda, can generate the largest dimensional Sobol' sequences with $65536$ dimensions \cite{Sobol2011}. This might still be too low for an infinite-dimensional PDE setting.} and third, the approximation properties of some quasi-random sequences deteriorate as the dimensionality grows \cite{glasserman2013,DickKuoSloan2013}.

\section{Sampling independent realizations for QMC}
\label{sec:QMC_sample_white_noise}
To sample $u(\bm{x}, \omega)$, we must solve equation \eqref{eq:white_noise_eqn}. In what follows, we set $\eta=1$ and we will only consider the $k=1$ case for simplicity. We refer to \cite{Bolin2017}, \cite{Bolin2017SPDEApproach} for the general $k>d/4$ case. After these simplifications, we obtain
\begin{align}
\label{eq:white_noise_SPDE_reminder}
u - \kappa^{-2}\Delta u = \W,\quad \bm{x}\in D,\quad\text{with}\quad
u = 0,\quad \bm{x}\in\partial D.
\end{align}

From now on we introduce the simplifying assumption that $D=[0,1]^d$. Relaxing this assumption to general boxed domains is straightforward, but considering more general cases is non-trivial. We remark that the extended domain $D$ is just a tool needed for the SPDE approach and can be chosen arbitrarily. We are therefore free to choose any domain shape for $D$ anyways \cite{Lindgren2011}. Hence working with boxed domains is not really a restriction.
It is useful for what comes next to introduce the concept of a Haar mesh and a Haar space:
\begin{definition}[Haar mesh]
	\label{def:Haar_mesh}
	Let $D=[0,1]^d$ and let $\mathscr{L}\in\bar{\N}$. The Haar mesh $D_\mathscr{L}$ is the uniform quadrilateral mesh of $D$ whose cells are all regular polyhedra of volume $|\square_H| = 2^{-d(\mathscr{L}+1)}$. Note that for a given $\mathscr{L}$ there are exactly as many cells in $D_\mathscr{L}$ as terms in the wavelet expansion \eqref{eq:wavelet_QMC_white_noise} for $\W_\mathscr{L}$, namely $\mathscr{N}_\mathscr{L}=2^{d(\mathscr{L}+1)}$.
\end{definition}
\begin{definition}[Haar space]
	\label{def:Haar_space}
	Given a Haar mesh $D_\mathscr{L}$ with cells $\{\square_k\}_{k=1}^{\mathscr{N}_\mathscr{L}}$, we define its Haar space $V_H$ to be the space of piecewise constant functions over it; $V_H:=\spn(\psi_1,\dots,\psi_{\mathscr{N}_\mathscr{L}})$ with $\psi_k:=\ind_{\square_k}$. 
\end{definition}
We solve \eqref{eq:white_noise_SPDE_reminder} with the FEM. Let $D_h$ be a mesh of $D$, not necessarily nested within the Haar mesh $D_\mathscr{L}$. Let $V\subseteq H^1_0(D)$ and let $V_h=\spn(\varphi_1,\dots,\varphi_{\bar{m}})\subseteq V$ be the FE subspace used to solve equation \eqref{eq:white_noise_SPDE_reminder} on $D_h$. 
In what follows we will refer to $D_h$ as the \emph{FE mesh} and we assume for simplicity that there are always at most $m_e$ degrees of freedom of $V_h$ on each cell of $D_h$.

A discrete weak form of \eqref{eq:white_noise_SPDE_reminder} then reads: find $u_h\in V_h$ such that
\begin{align}
(u_h, v_h) + \kappa^{-2}(\nabla u_h,\nabla v_h) = \langle \W, v_h \rangle
\quad \text{for all } v_h\in V_h.
\end{align}
The solution ${u_h=\sum_{i=1}^{\bar{m}}u_i\varphi_i}$, expressed in terms of the basis functions of $V_h$, is given by the following linear system for the $u_i$,
\begin{align}
\label{eq:linear_system}
A\bm{u} = \bm{b},\quad\text{where}\quad A_{ij}:=(\varphi_i,\varphi_j) + \kappa^{-2}(\nabla\varphi_i,\nabla\varphi_j),\quad b_i := \langle \W, \varphi_i \rangle.
\end{align}
Now, since $\W = \W_{\mathscr{L}_\ell} + \W_R$, the $b_i$ can also be expressed as
\begin{align}
\label{eq:bL_bR}
b_i = (\bm{b}_\mathscr{L})_i + (\bm{b}_R)_i,\quad\text{with}\quad (\bm{b}_\mathscr{L})_i = (\W_\mathscr{L}, \varphi_i),\quad (\bm{b}_R)_i = \langle\W_R, \varphi_i\rangle,
\end{align}
Note that we use the $L^2(D)$ inner product notation for $\W_{\mathscr{L}}$ since $\W_{\mathscr{L}}$ is in $L^2(\Omega,L^2(D))$.

The task of computing a realization of white noise is therefore equivalent to computing a sample of $\W_{\mathscr{L}}$ and $\W_R$, and consequently of the two vectors $\bm{b}_{\mathscr{L}}$ and $\bm{b}_R$. As we will see, the sampling strategies for the two terms are considerably different. Nevertheless, we will explain how both terms can be sampled efficiently in linear or log-linear complexity.

Whenever the FE mesh is non-nested within the Haar mesh, our sampling algorithm requires the construction of a supermesh between the FE and Haar meshes so as to split the support of the wavelets and FE basis functions into smooth portions that can be integrated with high accuracy. Formally, this means that in the non-nested case we construct an auxiliary space $\dot{V}_h\subseteq L^2(D)$ as follows: first, we augment $V_h$ with FE basis functions of the same type with nodal values on $\partial D$ (we take these to be $b\in\N$) to obtain the intermediate space $V_h^b=\spn(\varphi_1,\dots,\varphi_{\bar{m}},\varphi_{{\bar{m}}+1},\dots,\varphi_{{\bar{m}}+b})\subseteq H^1(D)$. Then, we define $\dot{V}_h$ by restricting the support of the basis functions of $V_h^b$ to the Haar cells so that $\dot{V}_h=\sum_{k=1}^{\mathscr{N}_\mathscr{L}}V_h^b|_{\square_k}$, where $V_h^b|_{\square_k}=\spn(\varphi_1|_{\square_k},\dots,\varphi_{{\bar{m}}+b}|_{\square_k})$. Note that each basis function of $\dot{V}_h$ is entirely contained within a single Haar cell by construction. In this work we assume that constants can be exactly represented by the FE space\footnote{This is a standard assumption that is satisfied by most FE families, see e.g.~\cite{brenner2007mathematical}.} $V_h^b$ (i.e. $c\in V_h^b$ for all $c\in \R$) so that we have $V_h,V_H\subseteq \dot{V}_h$. 

%\begin{remark}
%	\label{rem:split_haar}
%	Before proceeding, we remark that when the FE mesh is non-nested within the Haar mesh, a supermesh construction between the FE and Haar meshes is required to split the support of the wavelets and FE basis functions into smooth portions that can be integrated with high accuracy. Formally, this means that in the non-nested case we construct an auxiliary space $\dot{V}_h$ as follows: first, we augment $V_h$ with FE basis functions of the same type with nodal values on $\partial D$ (we take these to be $b\in\N$) to obtain the intermediate space $V_h^b=\spn(\phi_1,\dots,\phi_m,\phi_{m+1},\dots,\phi_{m+b})\subseteq H^1(D)$. Then, we define $\dot{V}_h$ by restricting the support of the basis functions of $V_h^b$ to the Haar cells so that $\dot{V}_h=\sum_{k=1}^{\mathscr{N}_\mathscr{L}}V_h^b|_{\square_k}$, where $V_h^b|_{\square_k}=\spn(\phi_1|_{\square_k},\dots,\phi_{m+b}|_{\square_k})$. Note that by construction we have $V_h,V_H\subseteq \dot{V}_h \subseteq L^2(D)$, and that each basis function of $\dot{V}_h$ is entirely contained within a single Haar cell.
%\end{remark}

\begin{remark}
	In what follows we will denote with $S_h$ a given supermesh between $D_h$ and $D_\mathscr{L}$. Furthermore, we will indicate and with $\{\phi_i\}_{i=1}^m$ ($m\in\N$) the basis functions of $\dot{V}_h$.
\end{remark}

We can now proceed with the description of our sampling algorithms.

\subsection{Sampling of $\W_\mathscr{L}$}
We first consider the efficient sampling of $\W_\mathscr{L}$.
Let $V_H$ be the Haar space over the Haar mesh $D_\mathscr{L}$. It turns out that $\W_\mathscr{L}\in V_H$ almost surely and that therefore it can be expressed in terms of the basis functions of $V_H$ as $\W_\mathscr{L} = \sum_{k=1}^{\mathscr{N}_\mathscr{L}}w_k\psi_i$, where $w_k$ is the value of $\W_\mathscr{L}$ over the Haar cell $\square_k$. In practice, rather than computing the inner products of each Haar wavelet with the basis functions of $\dot{V}_h$, it is more straightforward to just compute each entry of $\bm{b}_\mathscr{L}$ as $(\bm{b}_\mathscr{L})_i = w_{\kappa(i)}\int_D\phi_i \text{ d}\bm{x}$. Here $\kappa(i)$ is the index $k$ of the Haar cell that contains the support of $\phi_i$ and $w_{\kappa(i)}$ must be computed from each sample of the coefficients in the expansion for $\W_\mathscr{L}$. Before explaining how this is actually done in practice, we prove that $\W_\mathscr{L}$ is the $L^2$ projection of white noise onto $V_H$ and therefore $\W_\mathscr{L}$ does indeed belong to $V_H$.

\begin{lemma}
	\label{lemma:W_L_is_projection}
	Let $V_H:=\spn(\psi_1,\dots,\psi_{\mathscr{N}_\mathscr{L}})$ with $\psi_i:=\ind_{\square_i}$ be the space of piecewise constant functions over the cells $\square_i$ of the Haar mesh $D_\mathscr{L}$. Let $P_H$ be the $L^2$ projection onto $V_H$. Then we have that $\W_\mathscr{L} \equiv P_H\W$ in $L^2(\Omega, L^2(D))$.
\end{lemma}
\begin{proof}
	We note that all the Haar wavelets in the expansion for $\W_\mathscr{L}$ can be represented as a linear combination of basis functions of $V_H$. Since there are exactly as many wavelets as basis functions of $V_H$ (see definition \ref{def:Haar_mesh}) and since these wavelets are linearly independent, we conclude that the Haar wavelets form a basis of $V_H$. Therefore $\W_\mathscr{L}\in L^2(\Omega,V_H)$ and $\langle \W_R, v\rangle = 0$ for all $v \in V_H$ (cf.~equation \eqref{eq:W_R_typeI}). Furthermore, for all $v\in L^2(D)$,
	\begin{align}
	(\W_\mathscr{L}, v) = (\W_\mathscr{L}, P_Hv + v^\perp) = (\W_\mathscr{L}, P_Hv) = \langle \W, P_Hv \rangle =: (P_H\W, v),
	\end{align}
	almost surely since $\langle \W_R, P_Hv\rangle = 0$ for all $v\in L^2(D)$ (cf.~equation \eqref{eq:W_R_typeI}). Here we used the fact that all $v\in L^2(D)$ can be split as $v = P_Hv + v^\perp$, where $v^\perp \in V_H^\perp$.
\end{proof}

Note that the dimension of the space $V_H$ is $\mathscr{N}_\mathscr{L}=2^{d(\mathscr{L}+1)}$, which can easily become a large number. If we were to sample $\W_\mathscr{L}$ with a pure QMC approach, we would therefore need a $\mathscr{N}_\mathscr{L}$-dimensional QMC point sequence, that we might not be able to sample given the restrictions of some modern QMC point generators\footnote{E.g.~currently there is no software support for good quality Sobol' sequences of dimension higher than 65536 \cite{Sobol2011}.}. In the interest of reducing the QMC dimension, we reorder the terms in the expression for $\W_\mathscr{L}$ in \eqref{eq:wavelet_QMC_white_noise} with respect to a total degree ordering rather than a full tensor grid ordering (i.e.~we reorder them with respect to the $1$ norm of $\bm{l}$ rather than the $\max$ norm) and we then only sample the Haar coefficients with $||\bm{l}||_1\leq \mathscr{L}$ which are always much less. To fix the notation, for $s\in\bar{\mathbb{N}}$, we define the set
\begin{align}
\mathcal{H}(s):=\{\bm{l} \in \bar{\mathbb{N}}^d\ :\ ||\bm{l}||_1=s,\ |\bm{l}|\leq \mathscr{L}\},\quad\text{with}\quad \mathcal{H}(-1) = \{\bm{-1}\}.
\end{align}
This is the set of all Haar level vectors in the expansion for $\W_\mathscr{L}$ of a given total degree. After reordering the terms in the expression for $\W_\mathscr{L}$ in \eqref{eq:wavelet_QMC_white_noise} with respect to the $1$-norm of $\bm{l}$ we obtain:
\begin{align}
\label{eq:WL_total_degree}
\W_\mathscr{L} = \sum\limits_{s=-1}^{\mathscr{L}} \sum\limits_{\bm{l}\in\mathcal{H}(s)}\sum\limits_{\bm{n}=\bm{0}}^{(2^{\bm{l}}-\bm{1})^+}z_{\bm{l},\bm{n}}(\omega)H_{\bm{l},\bm{n}}(\bm{x}) + \sum\limits_{s=\mathscr{L}+1}^{d\mathscr{L}} \sum\limits_{\bm{l}\in\mathcal{H}(s)}\sum\limits_{\bm{n}=\bm{0}}^{(2^{\bm{l}}-\bm{1})^+}z_{\bm{l},\bm{n}}(\omega)H_{\bm{l},\bm{n}}(\bm{x}),
\end{align}
where we sample the coefficients in the first sum on the right hand side with QMC and the remaining coefficients with a standard MC approach. We therefore adopt a hybrid sampling technique for $\W_\mathscr{L}$.

In order to achieve good convergence with respect to the number of QMC samples we order the QMC dimensions according to $s$ so that the first dimension corresponds to $z_{-\bm{1},\bm{0}}$, the second batch of dimensions correspond to the terms with $s=0$, the third batch of dimensions to the terms with $s=1$ and so on up until $s=\mathscr{L}$. We map each sampled QMC point (in our case Sobol' with digital shifting \cite{glasserman2013}) to a Gaussian-distributed sequence by applying the inverse Normal CDF. We sample the remaining coefficients independently using a pseudo-random number generator.

\begin{remark}
	Note that both orderings ($\max$ and $1$ norm) are essential: the white noise expansion \eqref{eq:Haar_expansion_full} must be split according to the max norm so that $\W_\mathscr{L}$ can be interpreted as the projection of white noise onto $V_H$. In principle, the max norm could also be used to enforce the ordering required for QMC convergence. However, this would involve sampling for an exceptionally high dimensional QMC point sequence. Ordering the terms in the expansion for $\W_\mathscr{L}$ according to the $1$ norm instead allows us to still enforce a good QMC ordering while reducing the dimension of the QMC point rule used.
\end{remark}

We now propose the following algorithm for sampling $\W_\mathscr{L}$:\\\vspace{-6pt}
\paragraph{Algorithm for the sampling of $\W_\mathscr{L}$}
\begin{enumerate}
	\item Compute the supermesh between the FE mesh and the Haar mesh to obtain $\dot{V}_h=\spn(\phi_1,\dots,\phi_m)$. Compute the scalar map $\kappa(i)$ that maps each $i$ to the index $k$ of the Haar cell $\square_k$ that contains the support of $\phi_i$ and compute $\int_D \phi_i \text{ d}\bm{x}$ for all $i=1,\dots,m$. This step can be done offline.
	\item Sample the vector $\bm{z}_\mathscr{L} \in \R^{\mathscr{N}_\mathscr{L}}$ of the coefficients in the expression \eqref{eq:wavelet_QMC_white_noise} for $\W_\mathscr{L}$ as $\bm{z}_\mathscr{L} = [\bm{z}_{\text{QMC}}^T,\bm{z}_{\text{MC}}^T]^T$, where $\bm{z}_{\text{QMC}}$ is a randomized QMC sequence point of dimension equal to the number of coefficients with $||\bm{l}||_1\leq \mathscr{L}$ and $\bm{z}_{\text{MC}}$ is sampled with a pseudo-random number generator.
	\item Loop over each Haar mesh cell $\square_k$ and sample the values $w_k$ of $\W_\mathscr{L}$ over $\square_k$ as follows. Let $J(\bm{l},\bm{n})$ be the index map that given $(\bm{l},\bm{n})$ returns the index $j$ such that ${z_{\bm{l}, \bm{n}} = (\bm{z}_\mathscr{L})_j}$ (the two vectors are the same up to reordering) and define $\bm{m}_k\in\R^{d}$ to be the coordinate vector of the midpoint of $\square_k$. For each $k=1,\dots,\mathscr{N}_\mathscr{L}$ and $\bm{l}$ with $|\bm{l}| \leq \mathscr{L}$, there is only one wavelet with level vector $\bm{l}$ with non-zero support over $\square_k$. For $i=1,\dots,d$, its wavelet number is given by $(\bar{\bm{n}}_k(\bm{l}))_i = \lfloor(\bm{m}_k)_i2^{\bm{l}_i}\rfloor$ and its sign over $\square_k$ by $\bar{s}_k(\bm{l}):=\prod_{i=1}^ds_k(\bm{l}_i)$, where the $s_k(\bm{l}_i)$ are the signs of the 1D Haar wavelets in the tensor product for $H_{\bm{l},\bar{\bm{n}}_k(\bm{l})}$, namely
	\begin{align}
	\label{alg:sign_1D_Haar_wavelet}
	s_k(\bm{l}_i) := 1-2(\lfloor(\bm{m}_k)_i2^{\bm{l}_i+1}\rfloor \pmod 2).
	\end{align}
	This expression comes from the fact that Haar wavelets are positive on even Haar cells and negative on odd cells. We set for all $k=1,\dots,\mathscr{N}_\mathscr{L}$,
	\begin{align}
	w_k := \sum\limits_{\bm{l}=\bm{0}}^{|\bm{l}|\leq \mathscr{L}}\bar{s}_k(\bm{l}) \bm{z}_{J(\bm{l},\bar{\bm{n}}_k(\bm{l}))}2^{||\bm{l}^+||_1/2}
	\end{align}
	\item For all $i=1,\dots,m$, set $(\bm{b}_\mathscr{L})_i := w_{\kappa(i)}\int_D\phi_i \text{ d}\bm{x}$ (note that here the support of each $\phi_i$ is entirely contained by the Haar cell $\square_{\kappa(i)}$). Assemble all these contributions into $\bm{b}_\mathscr{L}$.
\end{enumerate}

\begin{remark}%[Exploiting the structure in $D_\mathscr{L}$]
	\label{rem:intersections_with_haar_mesh}
	In point $1$ and $3$ above we exploit the fact that the Haar mesh is uniform and structured.
	For instance, we can readily obtain the Haar mesh cell in which any point $\bm{p}\in D$ lies: it belongs to the ${\lfloor(\bm{p})_i2^{\mathscr{L}+1}\rfloor\text{-th}}$ Haar cell from the origin in the $i$-th coordinate direction. The expressions for $\bar{\bm{n}}_k(\bm{l})$ and $\bar{s}_k(\bm{l})$ in point $3$ above also derive from the same considerations.
\end{remark}

\begin{remark}[Complexity of the sampling of $\W_\mathscr{L}$]
	\label{rem:supermesh_complexity}
	Let $m$ be the number of basis functions that span $\dot{V}_h$, let $\mathscr{N}_\mathscr{L}=2^{d(\mathscr{L}+1)}$ be the number of cells in the Haar mesh and let $N_\mathscr{L}=(\mathscr{L}+2)^d$ be the number of wavelets that are non-zero over a given Haar cell. In general, it is possible to sample $\W_\mathscr{L}$ in $O(m + N_\mathscr{L}\mathscr{N}_\mathscr{L})$ complexity, which reduces to $O(m + \mathscr{N}_\mathscr{L})$ in the case in which we are using non-standard Haar wavelets\footnote{In this case it is possible to use a multi-dimensional generalization of the Brownian bridge construction (of which $\W_\mathscr{L}$ is the derivative) which is well known in the computational finance literature \cite{glasserman2013}.} (cf.~remark \ref{rem:compactly_supported_Haar_basis}). If $D_h$ is not nested within $D_\mathscr{L}$ and a supermesh construction is needed the cost complexity becomes $O(\mathscr{N}_Sm_e + N_\mathscr{L}\mathscr{N}_\mathscr{L})$, where $\mathscr{N}_S$ is the number of cells in the supermesh between $D_h$ and $D_\mathscr{L}$ and $m_e$ is the number of degrees of freedom on each supermesh cell $e$. Owing to theorem 1.1 in \cite{quasi-uniform-supermeshing}, when $D_h$ is quasi-uniform we have $\mathscr{N}_S = O(\mathscr{N}_h + \mathscr{N}_\mathscr{L})$, where $\mathscr{N}_h$ is the number of cells of $D_h$ and $a > 0$. This gives a linear cost complexity in the number of cells of $D_h$ and log-linear in the number of cells of $D_\mathscr{L}$ since $N_\mathscr{L} = O((\log_2(\mathscr{N}_\mathscr{L})/d)^d)$. The log-term can be dropped if we use non-standard wavelets. 
	%and retain linear cost complexity in $\mathscr{N}_\mathscr{L}$. In this case we set $\mathscr{N}_\mathscr{L} = O(\mathscr{N}_h)$. As we will see in section \ref{sec:MLQMC_num_res}, even in the non-compactly supported case we can achieve good numerical performance. In some cases there is also no disadvantage in capping the maximum Haar level so that $\mathscr{N}_\mathscr{L}N_\mathscr{L}$ is bounded by a constant.
\end{remark}

\subsection{Sampling of $\W_R$}

We now consider the efficient sampling of $\W_R$. Dealing with an infinite summation is complicated. However, we can circumvent this problem by noting that the covariance of $\W_R$ is known since, as $\W_\mathscr{L}$ is independent from $\W_R$ by construction, for all $u,v\in L^2(D)$ we have
\begin{align}
\label{eq:cov_Wr_splitting}
\E[\langle \W_R, u\rangle\langle \W_R, v\rangle] = \E[\langle \W, u\rangle\langle \W, v\rangle] - \E[(\W_\mathscr{L}, u)(\W_\mathscr{L}, v)],
\end{align}
where the covariance of $\W$ is known by definition \ref{def:white_noise} and the covariance of $\W_\mathscr{L}$ is given by the following lemma.
\begin{lemma}
	\label{lemma:covariance_Wr}
	Let $\square_i$ for $i=1,\dots,\mathscr{N}_\mathscr{L}$ be the $i$-th cell of $D_\mathscr{L}$ of volume\\ ${|\square_i| = 2^{-d(\mathscr{L}+1)}=|\square_H|}$ for all $i$ (see definition \ref{def:Haar_mesh}). Then, for all $u,v\in L^2(D)$,
	\begin{align}
	\mathcal{C}_\mathscr{L}(u,v) := \E[(\W_\mathscr{L}, u)(\W_\mathscr{L}, v)] = \sum\limits_{i=1}^{\mathscr{N}_\mathscr{L}}\frac{1}{|\square_i|}\int_{\square_i}u\text{ d}\bm{x}\int_{\square_i}v\text{ d}\bm{x}.
	\end{align}
\end{lemma}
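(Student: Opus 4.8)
The plan is to reduce the claim to the $L^2$-projection characterisation of $\W_\mathscr{L}$ proved in Lemma~\ref{lemma:W_L_is_projection} and then to invoke the defining covariance of white noise from Definition~\ref{def:white_noise}. First I would equip $V_H$ with the orthonormal basis of normalised cell indicators $e_i := |\square_i|^{-1/2}\ind_{\square_i}$ for $i=1,\dots,\mathscr{N}_\mathscr{L}$; these are orthonormal because the Haar cells are pairwise disjoint with $(e_i,e_j) = |\square_i|^{-1}\int_{\square_i\cap\square_j}\text{d}\bm{x} = \delta_{ij}$. In this basis the $L^2$ projection onto $V_H$ reads $P_Hw = \sum_{i=1}^{\mathscr{N}_\mathscr{L}}(w,e_i)\,e_i$ for every $w\in L^2(D)$.

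Next I would use Lemma~\ref{lemma:W_L_is_projection} to write $(\W_\mathscr{L},u) = (P_H\W,u) = \langle\W,P_Hu\rangle = \sum_{i}(u,e_i)\,\langle\W,e_i\rangle$, exploiting the linearity of the white-noise pairing, and set $b_i := \langle\W,e_i\rangle$. By Definition~\ref{def:white_noise} the $b_i$ are jointly Gaussian with $\E[b_ib_j] = (e_i,e_j) = \delta_{ij}$. Since there are only $\mathscr{N}_\mathscr{L}$ cells, all sums are finite and I may expand the product and interchange expectation with the sums without any analytic scruple:
\begin{align}
\E[(\W_\mathscr{L},u)(\W_\mathscr{L},v)] = \sum_{i,j}(u,e_i)(v,e_j)\,\E[b_ib_j] = \sum_{i=1}^{\mathscr{N}_\mathscr{L}}(u,e_i)(v,e_i).
\end{align}

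Finally I would substitute $(u,e_i) = |\square_i|^{-1/2}\int_{\square_i}u\,\text{d}\bm{x}$ and the analogous identity for $v$, which collapses each summand to $|\square_i|^{-1}\int_{\square_i}u\,\text{d}\bm{x}\int_{\square_i}v\,\text{d}\bm{x}$ and yields the stated formula. I do not anticipate a genuine obstacle: because $V_H$ is finite-dimensional there are no convergence or limit-interchange issues, and the only point needing a little care is the bookkeeping of the duality defining $P_H\W$ together with the identity $\langle\W_R,v\rangle = 0$ for $v\in V_H$ (already recorded in the proof of Lemma~\ref{lemma:W_L_is_projection}), which is exactly what makes $(\W_\mathscr{L},e_i) = \langle\W,e_i\rangle$ legitimate. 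As a consistency check one may instead start from the orthonormal wavelet expansion of $\W_\mathscr{L}$: since $\{H_{\bm{l},\bm{n}}\}$ is a second orthonormal basis of $V_H$, the same argument gives $\E[(\W_\mathscr{L},u)(\W_\mathscr{L},v)] = \sum_{\bm{l},\bm{n}}(H_{\bm{l},\bm{n}},u)(H_{\bm{l},\bm{n}},v) = (P_Hu,v)$, the basis-independent quantity to which the cell-indicator sum also evaluates.
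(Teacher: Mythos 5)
Your proposal is correct and follows essentially the same route as the paper's proof: both reduce the claim to Lemma~\ref{lemma:W_L_is_projection} (so that $(\W_\mathscr{L},u)=\langle\W,P_Hu\rangle$), invoke the covariance structure of white noise from Definition~\ref{def:white_noise}, and then evaluate the resulting inner product of projections cell by cell. The only difference is cosmetic: you expand in the orthonormal basis of normalised indicators $e_i$ and compute $\sum_i(u,e_i)(v,e_i)$, while the paper writes $\E[\langle\W,P_Hu\rangle\langle\W,P_Hv\rangle]=(P_Hu,P_Hv)$ directly and expands in the unnormalised basis $\psi_i$ — the two computations are identical via Parseval's identity.
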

\begin{proof}
	Let $P_H$ be the $L^2$ projection onto $V_H$, then for all $u\in L^2(D)$ we have that $P_Hu=\sum_{i=1}^{\mathscr{N}_\mathscr{L}}u_i\psi_i$ satisfies
	\begin{align}
	(P_Hu,v_H) = (u,v_H),\quad \forall v_H\in V_H.
	\end{align}
	A standard FE calculation gives that the coefficients $u_i$ are given by
	\begin{align}
	\label{eq:reproof1}
	u_i = \frac{1}{|\square_i|}(u,\psi_i) = \frac{1}{|\square_i|}\int_{\square_i}u \text{ d}\bm{x}.
	\end{align}
	
	We conclude by using lemma \ref{lemma:W_L_is_projection} to show that, for all $u,v\in V$ such that $P_Hu = \sum_{i=1}^{\mathscr{N}_\mathscr{L}}u_i\psi_i$ and $P_Hv = \sum_{i=1}^{\mathscr{N}_\mathscr{L}}v_i\psi_i$,
	\begin{align}
	\E[(\W_\mathscr{L},u)(\W_\mathscr{L},v)] = \E[\langle\W,P_Hu\rangle\langle\W,P_Hv\rangle] = (P_Hu,P_Hv) \notag\\
	= \sum\limits_{i,j=1}^{\mathscr{N}_\mathscr{L}}u_iv_j(\psi_i,\psi_j) = \sum\limits_{i=1}^{\mathscr{N}_\mathscr{L}}|\square_i|u_iv_i= \sum\limits_{i=1}^{\mathscr{N}_\mathscr{L}}\frac{1}{|\square_i|}\int_{\square_i}u\text{ d}\bm{x}\int_{\square_i}v\text{ d}\bm{x}.
	\end{align}
\end{proof}

\begin{remark}
	The sampling strategies for $\W_\mathscr{L}$ and $\W_R$ presented in this work are conceptually different. In the $\W_\mathscr{L}$ case we use the Haar wavelet representation to make sure that the variables in the quasi-random sequence are ordered correctly. Therefore the use of the Haar representation is crucial in the sampling of $\W_\mathscr{L}$. In the $\W_R$ case, instead, the ordering is irrelevant as $\W_R$ is sampled by using pseudo-random numbers. For this reason we can ``forget'' about the wavelet representation in this case and sample $\W_R$ as it is done for any standard Gaussian field, i.e.~by factorising its covariance matrix after discretization.
\end{remark}

It is then readily shown from lemma \ref{lemma:covariance_Wr} and from \eqref{eq:cov_Wr_splitting} that the covariance of $\W_R$ is
\begin{align}
\label{eq:C_R}
\mathcal{C}_R(u,v) := \E[\langle \W_R, u\rangle\langle \W_R, v\rangle] = (u,v) - \sum\limits_{i=1}^{\mathscr{N}_\mathscr{L}}\frac{1}{|\square_i|}\int_{\square_i}u\text{ d}\bm{x}\int_{\square_i}v\text{ d}\bm{x},
\end{align}
for all $u,v\in L^2(D)$.
Before proceeding, we show that $\mathcal{C}_R$ is a proper covariance function, i.e.~that it is positive semi-definite.
\begin{lemma}
	\label{lemma:covariance_W_R_pos_semidef}
	The covariance of $\W_R$, $\mathcal{C}_R$, is positive semi-definite.
\end{lemma}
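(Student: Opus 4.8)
The plan is to recognise that the subtracted sum in \eqref{eq:C_R} is precisely the $L^2$ inner product of the projected arguments, so that $\mathcal{C}_R$ reduces to an inner product of projection residuals. Indeed, the chain of equalities in the proof of lemma \ref{lemma:covariance_Wr} shows that for all $u,v\in L^2(D)$,
\begin{align}
\sum_{i=1}^{\mathscr{N}_\mathscr{L}}\frac{1}{|\square_i|}\int_{\square_i}u\text{ d}\bm{x}\int_{\square_i}v\text{ d}\bm{x} = (P_Hu, P_Hv),
\end{align}
where $P_H$ is the $L^2$ projection onto $V_H$. Substituting this into \eqref{eq:C_R} gives $\mathcal{C}_R(u,v) = (u,v) - (P_Hu, P_Hv)$, which I would then rewrite using the orthogonal splitting $u = P_Hu + u^\perp$ with $u^\perp = (I-P_H)u \in V_H^\perp$, exactly as in the proof of lemma \ref{lemma:W_L_is_projection}.

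The key step is to exploit orthogonality of the projection. Since $(P_Hu, v^\perp) = (u^\perp, P_Hv) = 0$, I would expand
\begin{align}
(u,v) = (P_Hu + u^\perp, P_Hv + v^\perp) = (P_Hu, P_Hv) + (u^\perp, v^\perp),
\end{align}
so that $\mathcal{C}_R(u,v) = (u^\perp, v^\perp) = ((I-P_H)u, (I-P_H)v)$. In particular,
\begin{align}
\mathcal{C}_R(u,u) = \Vert(I-P_H)u\Vert_{L^2(D)}^2 \geq 0,
\end{align}
which is exactly the positive semi-definiteness claim. This representation also makes the symmetry of $\mathcal{C}_R$ transparent and shows that, in covariance, $\W_R$ acts as white noise filtered through the complementary projection $I-P_H$.

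There is essentially no serious obstacle here: once the sum is identified with $(P_Hu,P_Hv)$, the result follows from the Pythagorean identity for an orthogonal projection. The only point to handle with care is that this identification has already been carried out inside the proof of lemma \ref{lemma:covariance_Wr}, so I would either cite it directly or reproduce the one-line finite element computation \eqref{eq:reproof1} yielding the projection coefficients $u_i = |\square_i|^{-1}\int_{\square_i} u\text{ d}\bm{x}$. As a sanity check, positive semi-definiteness is also immediate a posteriori from $\mathcal{C}_R(u,u) = \E[\langle \W_R, u\rangle^2] \geq 0$, since the right-hand side is a genuine second moment; I would nonetheless prefer the projection argument because it is self-contained and does not rely on the convergence of the infinite wavelet series defining $\W_R$.
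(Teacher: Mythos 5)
Your proof is correct, and it reaches the paper's key identity $\mathcal{C}_R(u,u)=\Vert u-P_Hu\Vert_{L^2(D)}^2$ by a genuinely different route. The paper argues probabilistically: it writes $\W_R = \W - P_H\W$ (via lemma \ref{lemma:W_L_is_projection}), moves the projection onto the test function using the defining identity $(P_H\W,u):=\langle\W,P_Hu\rangle$, so that $\langle\W_R,u\rangle = \langle\W,u-P_Hu\rangle$, and then reads off $\E[(\langle\W,u-P_Hu\rangle)^2]=\Vert u-P_Hu\Vert_{L^2(D)}^2$ from the white-noise covariance in definition \ref{def:white_noise}. You instead work entirely deterministically from the already-established formula \eqref{eq:C_R}: you identify the subtracted sum with $(P_Hu,P_Hv)$ (which is indeed exhibited inside the proof of lemma \ref{lemma:covariance_Wr}) and then apply the Pythagorean identity for the orthogonal projection, with no manipulation of the generalized field $\W_R$ itself. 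Your route buys a little more: it yields the full off-diagonal representation $\mathcal{C}_R(u,v)=((I-P_H)u,(I-P_H)v)$ rather than only the diagonal, makes symmetry manifest, and is self-contained given \eqref{eq:C_R}; the paper's route is shorter and exploits the generalized-field identity directly, and it states explicitly the null-space characterization ($\mathcal{C}_R(u,u)=0$ iff $u\in V_H$), which your representation also delivers but which you should spell out if you want the same conclusion. One small caveat: your starting point \eqref{eq:C_R} was itself derived using the independence of $\W_\mathscr{L}$ and $\W_R$, so your argument is not free of probabilistic input — it merely front-loads it; this does not affect correctness.
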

\begin{proof}
	With the same notation as in the proof of lemma \ref{lemma:covariance_Wr}, we have that, for all $u\in L^2(D)$,
	\begin{align}
	\mathcal{C}_R(u,u) := \E[(\langle\W_R, u\rangle)^2] = \E[(\langle\W - P_H\W, u\rangle)^2] = \E[(\langle\W, u - P_Hu\rangle)^2] = ||u-P_Hu||_{L^2(D)}^2,
	\end{align}
	since $\W_R = \W - \W_\mathscr{L} = \W - P_H\W$. Hence $\mathcal{C}_R(u,u)$ is always non-negative and it is zero if and only if $u\in V_H$.
\end{proof}

\begin{remark}
	In principle, if $D_\mathscr{L}$ is fine enough so that $\W_{\mathscr{L}}\approx \W$, then the correction $\W_R$ is not needed at all. However, Haar wavelets are only piecewise constant and we might only expect first order convergence of $\W_\mathscr{L}$ to $\W$. If so, large QMC dimensions and a very fine Haar mesh would be needed to make the correction term $\W_R$ negligible and this translates into very expensive samples of $\W_\mathscr{L}$. If we also compute samples of $\W_R$, however, the Haar level can be kept small.
\end{remark}

The sampling of $\W_R$ can be performed independently on each Haar cell. The idea is to first sample the action of $\W_R$ onto the basis functions of $\dot{V}_h$. If we focus our attention only on the basis functions $\phi_1,\dots,\phi_{m_k}\in \dot{V}_h$ of support entirely contained within a given Haar cell $\square_k$ (i.e.~the basis functions of $V_h^b|_{\square_k}$), we note that the expression \eqref{eq:C_R} for $\mathcal{C}_R$ simplifies to
\begin{align}
\mathcal{C}_R(\phi_i,\phi_j) = (\phi_i,\phi_j) - \frac{1}{|\square_k|}\int_{\square_k}\phi_i\text{ d}\bm{x}\int_{\square_k}\phi_j\text{ d}\bm{x},\quad\text{for all }i,j\in\{1,\dots,m_k\}.
\end{align}
Similarly as in \cite{Croci2018}, the sampling of $\W_R$ over $\square_k$ boils down to sampling a zero-mean Gaussian vector $\bm{b}_R^k$ with entries $(\bm{b}_R^k)_i = \langle\W_R,\phi_i\rangle$ and covariance matrix $C_R^k$ of entries $(C_R^k)_{ij}$ given by
\begin{align}
\bm{b}_R^k\sim\mathcal{N}(0, C_R^k),\quad (C_R^k)_{ij} = \mathcal{C}_R(\phi_i,\phi_j).
\end{align}
If we let $M_k$ be the local mass matrix over the space spanned by the $\{\phi_i\}_{i=1}^{m_k}$, with entries $(M_k)_{ij}=(\phi_i,\phi_j)$ and if we let the vector $\bm{I}^k\in\R^{m_k}$ be given by
\begin{align}
\label{eq:Ik}
\bm{I}^k = \left[\int_{\square_k}\phi_1\text{ d}\bm{x},\dots,\int_{\square_k}\phi_{m_k}\text{ d}\bm{x}\right]^T,
\end{align}
we can write $C_R^k$ as \vspace{-9pt}
\begin{align}
\label{eq:C_R^k}
C_R^k = M_k - \frac{1}{|\square_k|}\bm{I}^k(\bm{I}^k)^T.
\end{align}
The sampling of a Gaussian vector with this covariance through factorization is expensive as direct factorization of $C_R^k$ (e.g.~Cholesky) has an $O(m_k^3)$ and $O(m_k^2)$ cost and memory complexity respectively and it is therefore to be avoided.

We now show how $\bm{b}_R^k$ can be sampled efficiently by extending the techniques presented in \cite{Croci2018}. The main idea is to first sample a Gaussian vector with covariance $M_k$ in linear complexity and then perform an efficient update to obtain a sample of $\bm{b}_R^k$. We can write the action of $\W_R$ against each $\phi_i$ as
\begin{align}
\label{eq:W_R_against_phi_i}
\langle\W_R,\phi_i\rangle = \langle\W-\W_\mathscr{L},\phi_i\rangle = \langle\W,\phi_i\rangle - \langle\W_\mathscr{L},\phi_i\rangle = (\bm{b}_M^k)_i - w_k(\bm{I}^k)_i,
\end{align}
where $\bm{I}^k$ is given by \eqref{eq:Ik}, $w_k$ by 
\begin{align}
\label{eq:def_of_wk}
w_k = \frac{1}{|\square_k|}\langle\W,\ind_{\square_k}\rangle,\quad w_k\sim\mathcal{N}\left(0,\frac{1}{|\square_k|}\right),
\end{align}
and the vector $\bm{b}_M^k\in\R^{m_k}$ is given entrywise by
\begin{align}
(\bm{b}^k_M)_i = \langle\W,\phi_i\rangle,\quad i=1,\dots,m_k.
\end{align}
The variables $w_k$ and $\bm{b}_M^k$ are by definition \ref{def:white_noise} all zero-mean joint Gaussian variables with covariance
\begin{align}
\E[w_kw_k] = \frac{1}{|\square_k|},\quad \E[\bm{b}^k_Mw_k] = \frac{\bm{I}^k}{|\square_k|},\quad \E[\bm{b}^k_M(\bm{b}^k_M)^T]=M_k.
\end{align}
Thanks to these relations and to \eqref{eq:W_R_against_phi_i}, if we set 
\begin{align}
\bm{b}_R^k := \bm{b}_M^k - w_k\bm{I}^k,
\end{align}
then the covariance of $\bm{b}_R^k$ is correct (cf. equation \eqref{eq:C_R^k}) since
\begin{align}
\E[\bm{b}_R^k(\bm{b}_R^k)^T] &= \E[(\bm{b}_M^k - w_k\bm{I}^k)(\bm{b}_M^k - w_k\bm{I}^k)^T] \notag\\
&=\E[\bm{b}^k_M(\bm{b}^k_M)^T] - \E[\bm{b}^k_Mw_k](\bm{I}^k)^T - \bm{I}^k\E[\bm{b}^k_Mw_k]^T + \E[w_kw_k]\bm{I}^k(\bm{I}^k)^T \notag\\
&= M_k - \frac{1}{|\square_k|}\bm{I}^k(\bm{I}^k)^T.
\end{align}
In what follows, we exploit the fact that constants can be represented exactly by the FE subspace $\dot{V}_h$, i.e.~$c\in \dot{V}_h$ for all $c\in\R$. Let $\bm{\phi}_k = [\phi_1,\dots,\phi_{m_k}]^T$. This means that for each Haar cell $\square_k$ there exists a vector $\bm{c}_k\in\R^{m_k}$ such that $\ind_{\square_k}\equiv\bm{c}_k\cdot \bm{\phi}_k$. It is then straightforward to obtain $w_k$ from $\bm{b}^k_M$ since
\begin{align}
\label{eq:derive_wk_from_b_M}
\bm{c}_k\cdot\bm{b}^k_M = \sum\limits_{i=1}^{m_k}\langle \W, (\bm{c}_k)_i\phi_i\rangle = 
\langle \W, \bm{c}_k\cdot \bm{\phi}_k\rangle = \langle \W, \ind_{\square_k}\rangle = |\square_k|w_k,
\end{align}
hence $w_k = |\square_k|^{-1}\bm{c}_k\cdot\bm{b}^k_M$. Note that $\bm{c}_k$ is always known, e.g.~for Lagrange basis functions on simplices we have $\bm{c}_k=\bm{1}\in\R^{m_k}$.

We are now ready to sample $\W_R$ from its distribution. The idea is to use the strategy presented in \cite{Croci2018} and summarized in section \ref{subsec:fast_WN_sampling} to sample the $L^2$ projection of white noise over $\dot{V}_h$, and then to orthogonalize the result against $V_H$. This operation can be performed Haar-cellwise as follows:\\\vspace{-6pt}

\paragraph{Algorithm for the efficient sampling of $\W_R$}
\begin{enumerate}
	\item Loop over each Haar cell $\square_k$.
	\item Sample a Gaussian vector $\bm{b}^k_M\sim\mathcal{N}(0,M_k)$ in linear cost complexity using the technique presented in section 4.1 of \cite{Croci2018} (recall section \ref{subsec:fast_WN_sampling}). This operation is equivalent to sampling white noise over $V_h^b|_{\square_k}$.
	\item Set $w_k := |\square_k|^{-1}\bm{c}_k\cdot\bm{b}^k_M$ and compute $\bm{b}_R^k := \bm{b}_M^k - w_k\bm{I}^k$. If $\square_k$ is on the boundary of $D$, remove the entries of $\bm{b}_R^k$ corresponding to basis functions of $\dot{V}_h$ which do not vanish on $\partial D$ (recall how we constructed $\dot{V}_h$). 
\end{enumerate}

\begin{remark}
	This algorithm has $O(\mathscr{N}_Sm_e^3)$ cost and $O(\mathscr{N}_Sm_e^2)$ memory complexity, where $\mathscr{N}_S$ is the total number of supermesh cells \cite{Croci2018}. As discussed in remark \ref{rem:supermesh_complexity}, $\mathscr{N}_S$ is of $O(\mathscr{N}_h + \mathscr{N}_\mathscr{L})$, where $\mathscr{N}_h$ and $\mathscr{N}_\mathscr{L}$ are the number of cells of $D_h$ and of $D_\mathscr{L}$ respectively.
\end{remark}

\section{Sampling coupled realizations for MLQMC}
\label{sec:coupled_realizations_MLQMC}

We now generalize the QMC sampling algorithm just presented to the MLQMC case. Compared to standard Monte Carlo, both MLMC and QMC already bring a significant computational improvement. When the two are combined into MLQMC, it is sometimes possible to obtain the best of both worlds and further improve the computational complexity and speed. However, to do so, we must be able to satisfy the requirements and assumptions underlying both QMC and MLMC: we must order the dimensions of our random input in decaying order of importance as in QMC and introduce an approximation level hierarchy and enforce a good coupling between the levels as in MLMC. We now show how this can be done with white noise sampling.

In what follows we assume we have a MLQMC hierarchy of possibly non-nested FE approximation subspaces $\{V^\ell\}_{\ell=1}^L$ over the meshes $\{D_h^\ell\}_{\ell=1}^L$ and of accuracy increasing with $\ell$. Since as in the MLMC case (see \cite{Croci2018}) the only stochastic element in \eqref{eq:white_noise_SPDE_reminder} is white noise, on each MLQMC level we must be able to draw Mat\'ern field samples $u_\ell\in V^\ell$ and $u_{\ell-1}\in V^{\ell-1}$ for $\ell > 1$ that satisfy the following variational problems coupled by the same white noise sample: for a given $\omega_\ell^n\in\Omega$, find $u_\ell\in V^\ell$ and $u_{\ell-1}\in V^{\ell-1}$ such that
\begin{align}
\label{eq:coupled1}
(u_{\ell},v_{\ell}) + \kappa^{-2}(\nabla u_{\ell},\nabla v_{\ell})         &= \langle \W,v_{\ell} \rangle (\omega^n_\ell),\quad\hspace{10.5pt}\text{for all } v_{\ell}\in V^\ell,\\
(u_{\ell-1},v_{\ell-1}) + \kappa^{-2}(\nabla u_{\ell-1},\nabla v_{\ell-1}) &= \langle \W,v_{\ell-1} \rangle (\omega^n_\ell),\quad\text{for all } v_{\ell-1}\in V^{\ell-1}.
\label{eq:coupled2}
\end{align}
where the terms on the right hand side are coupled in the sense that they are centred Gaussian random variables with covariance $\E[\langle \W,v_{\lell} \rangle \langle \W,v_{s} \rangle] = (v_{\lell},v_s)$ for $\lell,s\in\{\ell,\ell-1\}$, as given by definition \ref{def:white_noise}. Again we order the dimensions of white noise by expanding it in the Haar wavelet basis as in \eqref{eq:Haar_expansion_full}, but this time we allow the Haar level to possibly increase with the MLQMC level and we split the expansion at the finer Haar level between the two MLQMC levels, $\mathscr{L}_\ell$,
\begin{align}
\label{eq:wavelet_MLQMC_white_noise1}
\W = \W_{\mathscr{L}_\ell} + \W_{R_\ell},
\end{align}
where the splitting of the expansion is done in the same way as in equation \eqref{eq:wavelet_QMC_white_noise}. The choice of Haar levels $\mathscr{L}_\ell$ on each MLQMC level is problem-dependent. We discuss this further in section \ref{sec:MLQMC_num_res}. From now on we assume that $\mathscr{L}_{\ell-1}\leq \mathscr{L}_\ell$, although extending the methods presented to decreasing Haar level hierarchies is straightforward. Note that the splitting of the expansion at the MLQMC level $\ell$ is done at Haar level $\mathscr{L}_\ell$ independently from the value of $\mathscr{L}_{\ell-1}$, i.e.~$\W$ on the coarser grid is always sampled with Haar level $\mathscr{L}_{\ell}$ on MLQMC level $\ell$ and not with Haar level $\mathscr{L}_{\ell-1}$. However, this does not affect the MLQMC telescoping sum since $\W$ is always sampled without bias on all levels owing to the correction term $\W_{R_\ell}$.

Let $\{\varphi^\ell_i\}_{i=1}^{m_\ell}$ and $\{\varphi^{\ell-1}_j\}_{j=1}^{m_{\ell-1}}$ be the basis functions spanning $V^\ell$ and $V^{\ell-1}$ respectively. Sampling white noise on both MLQMC levels again means to sample the vectors $\bm{b}_\mathscr{L}^\ell$, $\bm{b}_\mathscr{L}^{\ell-1}$, $\bm{b}_R^\ell$ and $\bm{b}_R^{\ell-1}$, with entries given by,
\begin{align}
(\bm{b}_\mathscr{L}^{\lell})_i := \langle \W_{\mathscr{L}_\ell}, \varphi^{\lell}_i\rangle,\quad (\bm{b}_R^{\lell})_i := \langle \W_{R_\ell}, \varphi^{\lell}_i\rangle,\quad\text{for } i = 1,\dots,m_{\lell},\quad \lell\in\{\ell,\ell-1\}.
\end{align}

Since we both require a multilevel coupling and a Haar wavelet expansion, this time we need to construct a \emph{three-way} supermesh $S_h$ between $D_{\mathscr{L}_\ell}$, $D_h^\ell$ and $D_h^{\ell-1}$ (note that $D_{\mathscr{L}_{\ell-1}}$ is always nested within $D_{\mathscr{L}_\ell}$ so there is no need for a four-way supermesh). Thanks to the supermesh construction we can split the support of all the basis functions of $V^\ell$ and $V^{\ell-1}$ to obtain the augmented subspaces $\dot{V}^\ell$ and $\dot{V}^{\ell-1}$ such that each $\phi^\ell_i\in\dot{V}^\ell$ and $\phi^{\ell-1}_j\in\dot{V}^{\ell-1}$ has support entirely contained within a single Haar cell. This is done in the same way as we did in the QMC case (cf.~section \ref{sec:QMC_sample_white_noise}) so that $V^{\lell},D_h^{\lell}\subseteq \dot{V}^{\lell} \subseteq L^2(D)$ for $\lell\in\{\ell,\ell-1\}$. The sampling of $\W$ in the MLQMC case is extremely similar to that of the QMC case with only a few differences concerning the sampling of $\W_{R_\ell}$ which we will now highlight.

Again, portions of $\W_{R_\ell}$ on separate Haar cells of $D_{\mathscr{L}_\ell}$ are independent and we can therefore sample $\W_{R_\ell}$ Haar cell-wise. As in the QMC case, we first sample $\W_{R_\ell}$ on the augmented subspaces $\{\dot{V}^\ell\}_{\ell=1}^L$. For each Haar cell $\square_k$ and for $\lell\in\{\ell,\ell-1\}$, let $\phi^{\lell}_1,\dots,\phi^{\lell}_{m^{\lell}_k}$ be the basis functions of $\dot{V}^{\lell}$ with non-zero support over $\square_k$ and define the Haar cell correction vectors $\bm{b}_{R,k}^{\lell}$ with entries $(\bm{b}_{R,k}^{\lell})_i = \langle \W, \phi^{\lell}_i\rangle$ for $i\in\{1,\dots,m^{\lell}_k\}$ and covariances given by,
\begin{align}
\E[\bm{b}_{R,k}^{\lell}(\bm{b}_{R,k}^{\lell})^T] = M_k^{\lell} - \frac{1}{|\square_k|}\bm{I}^k_{\lell}(\bm{I}^k_{\lell})^T,\quad \E[\bm{b}_{R,k}^\ell(\bm{b}_{R,k}^{\ell-1})^T]=M_k^{\ell,\ell-1} - \frac{1}{|\square_k|}\bm{I}^k_\ell(\bm{I}^k_{\ell-1})^T,
\end{align}
where $(M_k^{\lell})_{ij}=(\phi^{\lell}_i,\phi^{\lell}_j)$, $(M_k^{\ell,\ell-1})_{ij}=(\phi^\ell_i,\phi^{\ell-1}_j)$ and $(\bm{I}^k_{\lell})_i = \int_D \phi^{\lell}_i \text{ d}\bm{x}$. If we define $w_k$ as in \eqref{eq:def_of_wk} we can again write
\begin{align}
\bm{b}_{R,k}^{\lell} = \bm{b}_{M,k}^{\lell} - w_k\bm{I}^k_{\lell},\quad\text{for }\lell\in\{\ell,\ell-1\},
\end{align}
where $\bm{b}_{M,k}^{\lell} \sim\mathcal{N}(0,M_k^{\lell})$. Since constants can be represented exactly by both $\dot{V}^\ell$ and $\dot{V}^{\ell-1}$, i.e.~for all $c\in\R$ and for all $\lell\in\{\ell,\ell-1\}$ we have that $c\in \dot{V}^{\lell}$, then there exist two vectors $\bm{c}_k^\ell$ and $\bm{c}_k^{\ell-1}$ such that $\ind_{\square_k} \equiv \bm{c}_k^\ell\cdot\bm{\phi}_k^\ell\equiv\bm{c}_k^{\ell-1}\cdot\bm{\phi}_k^{\ell-1}$, where $\bm{\phi}_k^{\lell} = [\phi_1^{\lell},\dots,\phi_{m_k^{\lell}}^{\lell}]^T$ for $\lell\in\{\ell,\ell-1\}$. The same argument used to derive equation \eqref{eq:derive_wk_from_b_M} then gives
\begin{align}
w_k = \frac{1}{|\square_k|}\bm{c}_k^\ell\cdot\bm{b}_{M,k}^\ell = \frac{1}{|\square_k|}\bm{c}_k^{\ell-1}\cdot\bm{b}_{M,k}^{\ell-1}.
\end{align}
We can now proceed with the coupled sampling of $\W$ for MLQMC as follows:\\\vspace{-6pt}

\paragraph{Algorithm for the efficient sampling of $\W$ for MLQMC}
\begin{enumerate}
	\item Compute the three-way supermesh between the FE meshes and the Haar mesh $D_{\mathscr{L}_\ell}$ and construct $\dot{V}^\ell$ and $\dot{V}^{\ell-1}$. Compute the scalar maps $\kappa^{\lell}(i)$ that map each $i$ to the index $k$ of the Haar cell $\square_k$ that contains the support of $\phi_i^{\lell}$ and compute $\int_D \phi_i^{\lell} \text{ d}\bm{x}$ for all $i=1,\dots,m^l$ and for ${\lell}\in\{\ell,\ell-1\}$. This step can be done offline.
	\item Let $\mathscr{N}_{\mathscr{L}_\ell}$ be the number of cells of $D_{\mathscr{L}_\ell}$. Sample the vector $\bm{z}_{\mathscr{L}_\ell} \in \R^{\mathscr{N}_{\mathscr{L}_\ell}}$ of the coefficients in the expression for $\W_{\mathscr{L}_\ell}$ as $\bm{z}_\mathscr{L} = [\bm{z}_{\text{QMC}}^T,\bm{z}_{\text{MC}}^T]^T$, where $\bm{z}_{\text{QMC}}$ is a randomized QMC sequence point of dimension equal to the number of coefficients with $||\bm{l}||_1\leq {\mathscr{L}_\ell}$ and $\bm{z}_{\text{MC}}$ is sampled with a pseudo-random number generator.
	\item Compute the Haar cell values $\bar{w}_k$ of $\W_\mathscr{L}$ over all $\square_k$ for $k=1,\dots,\mathscr{N}_{\mathscr{L}_\ell}$ in the same way as in the QMC case (this step does not depend on the FE meshes).
	\item Use the technique presented in section 4.2 of \cite{Croci2018} to work supermesh cell by supermesh cell and sample in linear cost complexity the coupled Gaussian vectors $\bm{b}^\ell_{M,k}$ and $\bm{b}^{\ell-1}_{M,k}$ with covariance,
	\begin{align}
	\E\left[
	\left[\begin{array}{c}
	\bm{b}^\ell_{M,k}\\ \hline
	\bm{b}^{\ell-1}_{M,k}
	\end{array}\right]
	\left[\begin{array}{c}
	\bm{b}^\ell_{M,k}\\ \hline
	\bm{b}^{\ell-1}_{M,k}
	\end{array}\right]^T
	\right]
	= 
	\left[\begin{array}{c|c}
	M_k^\ell & M_k^{\ell,\ell-1} \\ \hline
	(M_k^{\ell,\ell-1})^T & M_k^{\ell-1}
	\end{array}\right].
	\end{align}
	This operation is equivalent to sampling the $L^2$ projection of white noise over $V_{S_\ell}|_{\square_k}=\dot{V}^\ell|_{\square_k} + \dot{V}_{\ell-1}|_{\square_k}$ using the technique from section \ref{subsec:fast_WN_sampling}. See \cite{Croci2018} for further details.
	\item For all $\lell\in\{\ell,\ell-1\}$, compute $(\bm{b}_\mathscr{L}^{\lell})_i := \bar{w}_{\kappa^{\lell}(i)}\int_D\phi_i^{\lell} \text{ d}\bm{x}$ for all $i=1,\dots,m^{\lell}$, then set $w_k := |\square_k|^{-1}\bm{c}_k^{\ell-1}\cdot\bm{b}_{M,k}^{\ell-1}$ and compute $\bm{b}_{R,k}^{\lell} = \bm{b}_{M,k}^{\lell} - w_k\bm{I}^k_{\lell}$. If $\square_k$ is on the boundary of $D$, remove the entries of $\bm{b}_{R,k}^{\lell}$ corresponding to the basis functions of $\dot{V}^{\lell}$ which do not vanish on $\partial D$ (cf.~section \ref{sec:QMC_sample_white_noise}).
\end{enumerate}

\begin{remark}[Complexity of the sampling of $\W$ for MLQMC]
	\label{rem:MLQMC_sampling_complexity}
	The overall complexity of this sampling strategy is $O(\mathscr{N}_Sm_e + N_{\mathscr{L}_\ell}\mathscr{N}_{\mathscr{L}_\ell})$ in the standard Haar wavelet case and $O(\mathscr{N}_{S_\ell}m_e^\ell + \mathscr{N}_{\mathscr{L}_\ell})$ in the non-standard case (cf.~remark \ref{rem:compactly_supported_Haar_basis}), where (cf.~remark \ref{rem:supermesh_complexity}) $\mathscr{N}_{S_\ell}$ is the number of cells of the three-way supermesh on the MLQMC level $\ell$, $m_e^\ell$ is the maximum number of dofs of $V^\ell$ per cell of $D_h^\ell$ and $N_{\mathscr{L}_\ell}$ is the number of wavelets that have non-zero support over any of the $\mathscr{N}_{\mathscr{L}_\ell}$ cells of $D_{\mathscr{L}_\ell}$. Since $\mathscr{N}_{S_\ell} = O(\mathscr{N}_h^\ell + \mathscr{N}_{\mathscr{L}_\ell})$ (cf.~theorem 1.1 in \cite{quasi-uniform-supermeshing}), where $\mathscr{N}_h^\ell$ is the number of cells of $D_h^\ell$, this gives an overall linear cost complexity in $\mathscr{N}_h^\ell$ and log-linear (linear for non-standard wavelets) in $\mathscr{N}_{\mathscr{L}_\ell}$.
\end{remark}

\begin{remark}[Simpler cases: nested meshes and $p$-refinement]
	When the MLQMC mesh hierarchy is nested and/or the hierarchy is composed by taking a single mesh and increasing the polynomial degree of the FE subspaces we have $V^{\ell-1}\subseteq V^\ell$. In this case everything we discussed still applies with the following simplifications: only a two-way supermesh between $D_h^\ell$ and $D_{\mathscr{L}_\ell}$ is needed on each MLQMC level in the $h$-refinement case. In the $p$-refinement case we only have one FE mesh $D_h$ and a single two-way supermesh construction is needed between $D_h$ and the finest Haar mesh $D_{\mathscr{L}_L}$.
\end{remark}

\begin{remark}[Non-nested mesh hierarchies and embedded domains]
	We assume that we are given a user-provided hierarchy  $\{G_h^\ell\}_{\ell=1}^{L}$ of possibly non-nested FE meshes of the domain $G$ on which we need the Mat\'ern field samples. From this we construct a boxed domain $D$ s.t.~$G\subset\joinrel\subset D$ and a corresponding hierarchy of Haar meshes $\{D_{\mathscr{L}_\ell}\}_{\ell=1}^{L}$ and of FE meshes of $D$, $\{D_h^\ell\}_{\ell=1}^{L}$. As in \cite{Croci2018}, it is convenient to construct each $D_h^\ell$ so that $G_h^\ell$ is nested within it, so that each Mat\'ern field sample can be transferred exactly and at negligible cost on the mesh on which it is needed (this is the embedded domain strategy proposed in \cite{Osborn2017}).
\end{remark}

\begin{remark}[Generic wavelets]
	\label{rem:QMC_generalizations_wavelets}
	It should be possible to generalize the presented sampling methods to generic orthogonal, compactly supported wavelets, although it is unclear as to whether this would bring any considerable advantage. This would likely increase the algorithm complexity and we leave this investigation to future research.
\end{remark}

\begin{remark}[General domains]
	\label{rem:QMC_generalizations}
	The same sampling method could possibly be generalized to arbitrary convex domains by introducing ``generalized'' Haar wavelets and meshes, obtained by partitioning $D_h$ into $\mathscr{N}_{\mathscr{L}}$ sub-regions each of which defining a Haar mesh cell. In this case the ``Haar cells'' would not be boxes, but polygons or polyhedra obtained by aggregating the FEM cells of the sub-regions and the ``Haar wavelets'' would just be piecewise constant over these cells. Establishing any theoretical results in this case would be more complex, but the same algorithm should carry forward after accounting for the fact that the ``Haar cells'' obtained through aggregation would have variable volume. The advantage of doing this is that no supermesh would then be required in the QMC case (the Haar mesh would be nested within $D_h$ by construction) and only one supermesh construction would be needed (between $D_h^\ell$ and $D_h^{\ell-1}$) in the non-nested MLQMC case. We leave the non-trivial implementation of this extension to future work.
\end{remark}

\section{Numerical results}
\label{sec:MLQMC_num_res}
We now test the algorithms presented. We consider problem \eqref{eq:diffusion_eqn_for_QMC_conv_general} over $G=(-0.5,0.5)^d$ with $F(x)=\exp(x)$ and forcing term $f = 1$, i.e.~we solve
\begin{align}
\label{eq:diffusion_eqn_for_QMC_conv_bis}
\begin{array}{rlc}
-\nabla\cdot(e^{u(\bm{x},\omega)}\nabla p(\bm{x},\omega)) = 1, & \bm{x}\in G =(-0.5,0.5)^d,& \omega\in\Omega,\\
p(\bm{x},\omega) = 0,& \bm{x}\in \partial G,& \omega\in\Omega,
\end{array}
\end{align}
where $u(\bm{x},\omega)$ is a Mat\'ern field sampled by solving equation \eqref{eq:white_noise_SPDE_reminder} over $D=(-1,1)^d$ with $\lambda = 0.125$ and mean and standard deviation chosen so that $\E[e^u]=1$, $\V[e^u]=0.2$. For simplicity, we take the squared $L^2(G)$ norm of $p$, $P(\omega) = ||p||_{L^2(G)}^2(\omega)$ as our output functional of interest.
\begin{remark}
	We do not consider functionals of the Mat\'ern field $u$ and we directly focus on the estimation of $\E[P]$. The reason is that in 2D and 3D the smoothness of $u$ is low and we only observe standard Monte Carlo convergence rates in numerical experiments (not shown).
\end{remark}

We solve equations \eqref{eq:white_noise_SPDE_reminder} and \eqref{eq:diffusion_eqn_for_QMC_conv_bis} with the FEniCS software package \cite{LoggEtAl2012}. For simplicity, we consider the $h$-refinement case and we discretize the equations using continuous piecewise-linear Lagrange elements. We employ the conjugate gradient routine of PETSc \cite{balay2014petsc} preconditioned by the BoomerAMG algebraic multigrid algorithm from Hypre \cite{hypre} to solve the linear systems arising from both equations.
We declare convergence when the absolute size of the preconditioned residual norm is
below a tolerance of $10^{-12}$. We employ the libsupermesh software package \cite{libsupermesh-tech-report} for the supermesh constructions. We use random digital shifted Sobol' sequences sampled with a custom-built\footnote{Available online at \url{bitbucket.org/croci/mkl_sobol/}.} Python and C wrapper of the Intel\textsuperscript{\tiny\textregistered} Math Kernel Library Sobol' sequence implementation augmented with Joe and Kuo's primitive polynomials and direction numbers \cite{JoeKuo2008} (maximum dimension $= 21201$). All the algorithms presented (as well as the MLMC methods from \cite{Croci2018}) are available online within the femlmc software package\footnote{Available online at \url{bitbucket.org/croci/femlmc/}.}.

We construct the mesh hierarchies $\{G_h^\ell\}_{\ell=1}^{L}$ and $\{D_h^\ell\}_{\ell=1}^{L}$ so that, for all MLQMC levels $\ell$, $G_h^\ell$ is nested within $D_h^\ell$, yet $G_h^{\ell-1}$ and $D_h^{\ell-1}$ are not nested within $G_h^{\ell}$ and $D_h^{\ell}$ respectively. We take all the meshes in both hierarchies to be simplicial, uniform and structured with mesh sizes $h_\ell = 2^{-1/2}\ 2^{-\ell}$ in 2D and $h_\ell = \sqrt{3}\ 2^{-(\ell + 1)}$ in 3D, although we do not exploit this structure in the implementation. The motivation behind this choice of grids is that these simpler test cases make for a cleaner asymptotic behaviour, making it easier to match convergence theory with computations\footnote{The use of unstructured non-nested grids can make the convergence behaviour erratic due to variation in mesh quality across the hierarchy, cf.~\cite{Croci2018}.}. For an application of the QMC sampling algorithm to more complicated grids and geometries, see \cite{CrociVinjeRognes2019bis}.

\begin{figure}[h!]
	%\vspace{-6pt}
	\centering
	%	\begin{subfigure}{.325\textwidth}
	%		\includegraphics[width=\textwidth, trim={0cm 0cm 0cm 0cm},clip]{Figures/MLQMC/mlqmc_bias_conv/mlqmc_conv_1D_QMC_3-13-0.eps}
	%		%\caption{}
	%	\end{subfigure}
	\begin{subfigure}{.35\textwidth}
		\includegraphics[width=\textwidth, trim={0cm 0cm 0cm 0cm},clip]{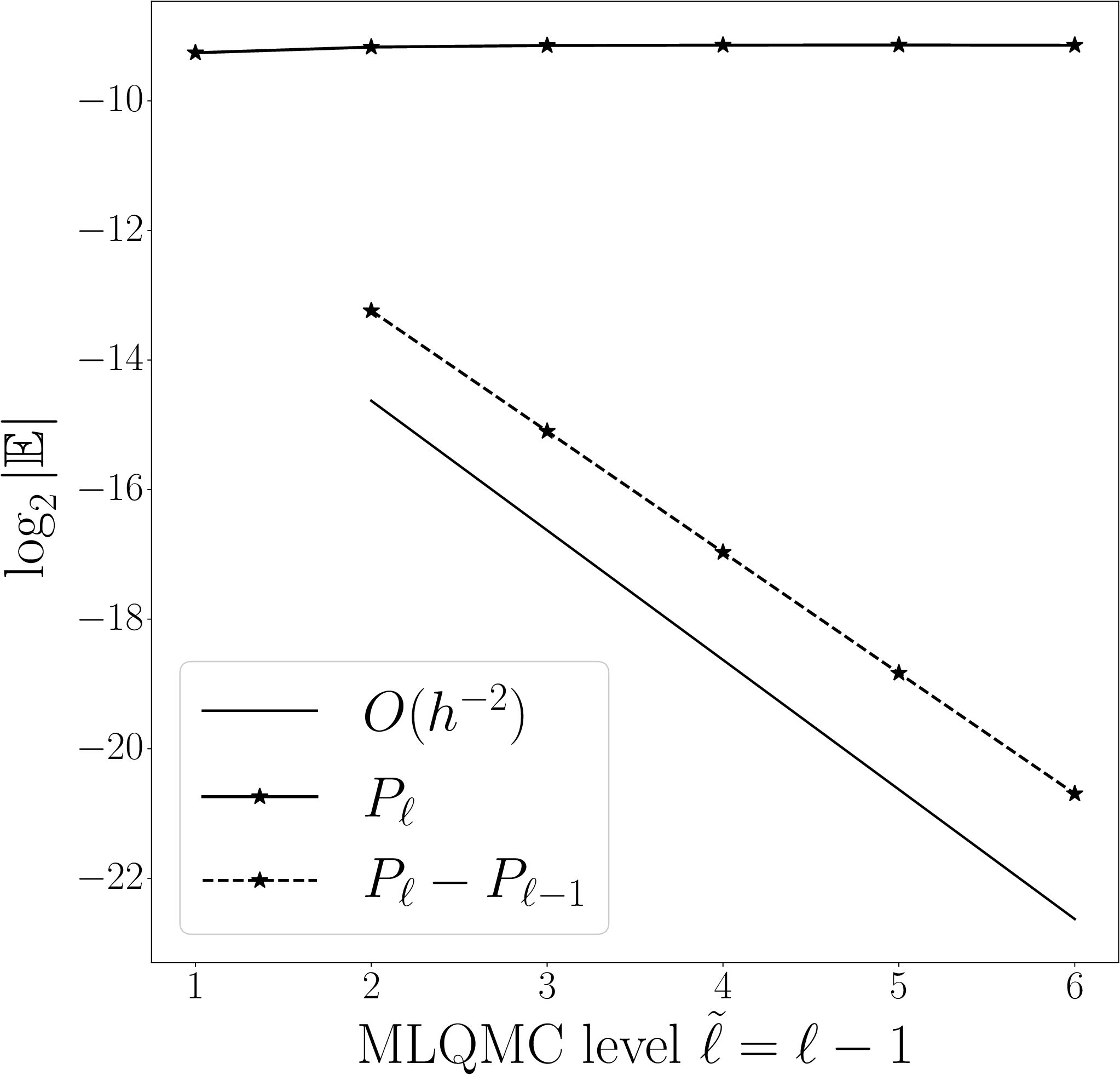}
		%\caption{}
	\end{subfigure}
	\vspace{6pt}
	\begin{subfigure}{.35\textwidth}
		\includegraphics[width=\textwidth, trim={0cm 0cm 0cm 0cm},clip]{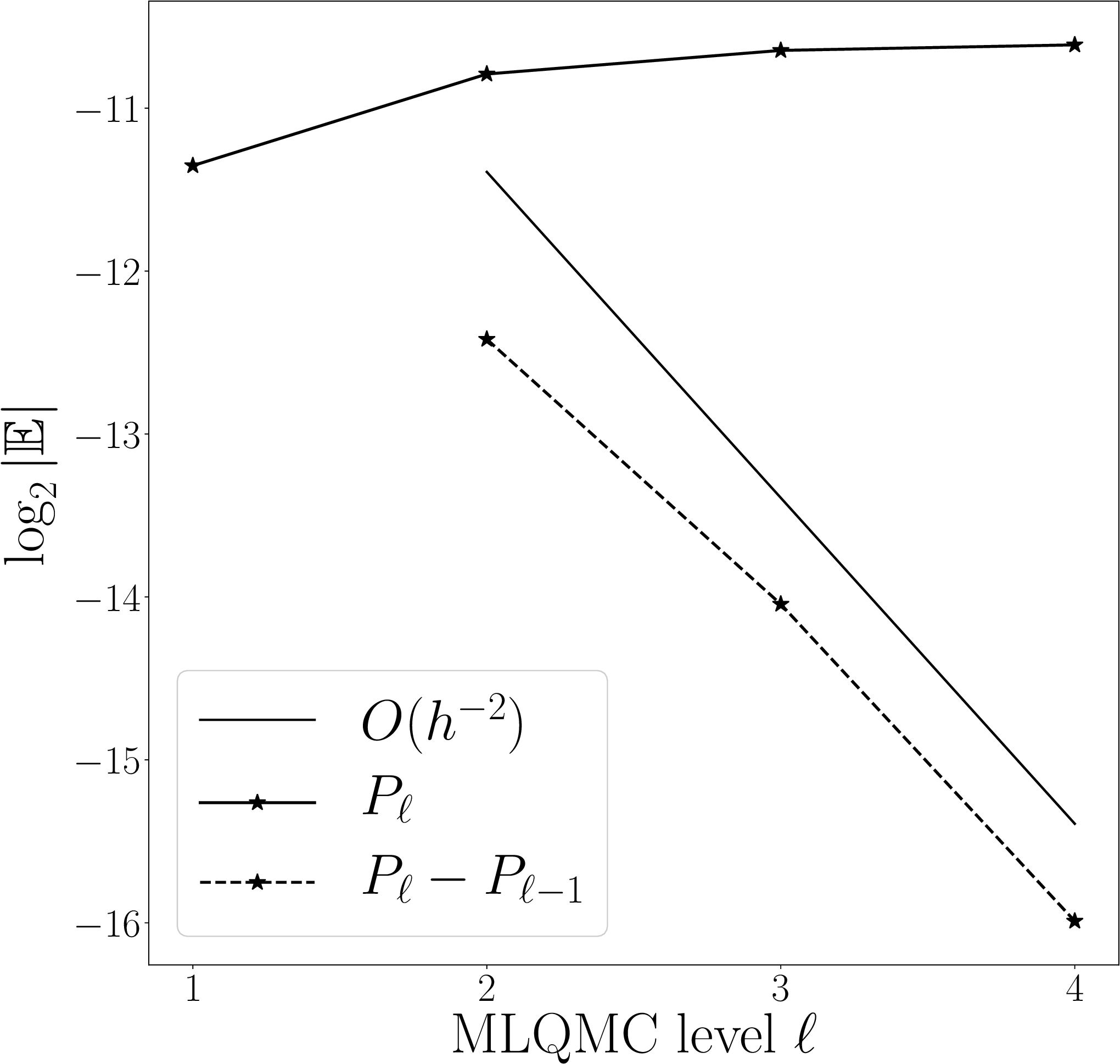}
		%\caption{}
	\end{subfigure}
	\vspace{-6pt}
	\caption{\textit{Logarithm of the absolute value of the expected value of $P_\ell$ and $P_\ell-P_{\ell-1}$ as a function of the MLQMC level $\ell$ in 2D (left) and 3D (right). We observe a decay rate of $O(h^{-2})$ in both dimensions. These results are independent from the Haar level chosen as we always compute the exact action of white noise independently from the choice of $\mathscr{L}$. In 2D the coarsest mesh of the hierarchy was dropped by the MLQMC routine as it is too coarse and it would not bring any significant performance advantage (same reasoning as for MLMC, see section 2.6 in \cite{giles2015multilevel}).}}
	\label{fig:MLQMC_bias_conv}
	\vspace{-18pt}
\end{figure}

We first study how the quantities $|\E[P_\ell]|$ and $|\E[P_\ell-P_{\ell-1}]|$ vary as the MLQMC level is increased. Assuming that $u$ can be sampled exactly, we expect the MLMC parameter value $\alpha$ to be $\alpha = \min(\nu+1, \bar{p}+1)$, where $\bar{p}$ is the polynomial degree \cite{Hackbusch1992}. Numerical results are shown in figure \ref{fig:MLQMC_bias_conv} where we observe a decay rate of $\alpha=2$ in 2D and 3D. In 3D we might have expected the rate to be $1.5$ due to the lack of smoothness of the coefficient $u$ which is only in $C^{0.5-\epsilon}(\bar{G})$ for any $\epsilon>0$ \cite{Hackbusch1992}. However, at the discrete level, the FEM approximation of $u$ is in $W^{1,\infty}(G)$ a.s.~and we might be observing a pre-asymptotic regime. 

As a next step, we analyse the convergence behaviour of QMC and MLQMC with respect to the number of samples. In the supplementary material (theorem \ref{th:hybrid_QMC_conv}) we show that in the QMC case we expect an initial faster-than-MC convergence rate followed by a standard MC rate of $O(N^{-1/2})$ and that the higher the Haar level is, the later the transition between the two regimes happens. No results regarding the MLQMC case were derived, but we expect a similar behaviour to occur. Furthermore, we would like to determine whether the multilevel technique can improve on QMC by bringing further variance reduction.

We draw inspiration from the original MLQMC paper by Giles and Waterhouse \cite{GilesWaterhouse2009} and we study the convergence behaviour of both QMC and MLQMC as the MLQMC level is increased. Results are shown in figure \ref{fig:QMC_conv}. We increase the Haar level with the MLQMC level so that the Haar mesh size is always proportional to the FE mesh size, but we consider two different strategies: 1) we choose the Haar mesh size to be comparable to the FE mesh size (figures \ref{fig:QMC_conv3}, \ref{fig:QMC_conv5}) and 2) we pick the Haar mesh size to be smaller than the FE mesh size (figures \ref{fig:QMC_conv4}, \ref{fig:QMC_conv6}). For both scenarios, we compute the variance $\V_\ell=\V[I_{N_\ell}^{m,\ell}]$ (cf.~equation \eqref{eq:mlqmc_level_estimator}) of the (ML)QMC estimator on MLQMC level $\ell$ by using $M=128$ ($M=64$ in 3D) randomizations of the Sobol' sequence used and we monitor the quantity $\log_2(N\V_\ell)$ as the number of samples $N$ is increased. Various line styles are used in figure \ref{fig:QMC_conv} to indicate the different sample sizes. The horizontal lines correspond to QMC and the oblique lines to MLQMC.

For standard MC and MLMC, we have $\V_\ell = O(N^{-1})$, giving a $\log_2(N\V_\ell)$ of $O(1)$. For this reason, if we were observing a MC-like convergence rate, we would see the different lines of figure \ref{fig:QMC_conv} overlapping. The fact that this does not happen means that we are in fact observing a QMC-like rate which is faster than $O(N^{-1})$ (for the variance). However, it is clear by looking at figures \ref{fig:QMC_conv3} and \ref{fig:QMC_conv5} that as $N$ grows the lines get closer to each other, marking a decay to a $O(N^{-1})$ rate of convergence (for the variance) as predicted by theorem \ref{th:hybrid_QMC_conv} (see supplementary material). By comparing the figures on the left hand side to those on the right hand side, it is also clear that increasing the Haar level delays the occurrence of this behaviour both in the QMC case and in the MLQMC case. Furthermore, it appears that in the MLQMC case the convergence rate decays sooner than in the QMC case. Finally, we note that MLQMC indeed benefits from the combination of QMC and MLMC: the variance of the MLQMC estimator on any level is always smaller than the corresponding QMC estimator for the same number of samples, with large variance reductions on the fine levels.

In figure \ref{fig:QMC_conv_part_2} we take a closer look at one of the 2D examples from figure \ref{fig:QMC_conv} and we compare the observed (ML)QMC convergence rate with the rate that would be expected in a standard MC regime. We monitor the standard deviation of the (ML)QMC estimators for $P_\ell$ (left) and $P_\ell-P_{\ell-1}$ (right). Initially, a QMC-like convergence rate is observed for both quantities, which eventually decays to a standard MC rate. This transition occurs later when the Haar level is larger. To see this, compare the different lines (in this test case a higher (ML)QMC level corresponds to a higher Haar level) in figure \ref{fig:QMC_conv_part_2}.

We now focus on the 2D case only for simplicity and see how both QMC and MLQMC perform in practice when applied to equation \eqref{eq:diffusion_eqn_for_QMC_conv_bis}. In figure \ref{fig:MLQMC_conv_1} we study the adaptivity and cost of (ML)QMC as the root mean square error tolerance $\varepsilon$ is decreased. We choose Haar meshes with mesh size comparable to the FE mesh size ($|\square_{\mathscr{L}_\ell}|^{1/2} = 2^{-\ell}$) and we fix the number of randomizations to be $M=32$. In the plot on the left we see how MLQMC automatically selects the number of samples according to the greedy strategy highlighted in the supplementary material \ref{secSM:multilevel_methods} and in \cite{GilesWaterhouse2009} so as to satisfy the given error tolerance. As in the MLMC case, more samples are taken on coarse levels and only a few on the fine levels.

\begin{figure}[h!]
	%\vspace{-6pt}
	\centering
	%	\begin{subfigure}{.49\textwidth}
	%		\includegraphics[width=\textwidth, trim={0cm 0cm 0cm 0cm},clip]{Figures/MLQMC/qmc_conv/qmc_conv_test_1D_FEM_2-9_QMC_3-10.eps}
	%		\caption{}
	%		\label{fig:QMC_conv1}
	%	\end{subfigure}
	%	\begin{subfigure}{.49\textwidth}
	%		\includegraphics[width=\textwidth, trim={0cm 0cm 0cm 0cm},clip]{Figures/MLQMC/qmc_conv/qmc_conv_test_1D_FEM_2-9_QMC_4-11.eps}
	%		\caption{}
	%		\label{fig:QMC_conv2}
	%	\end{subfigure}\\
	%\vspace{3pt}
	\begin{subfigure}{.49\textwidth}
		\includegraphics[width=\textwidth, trim={0cm 0cm 0cm 0cm},clip]{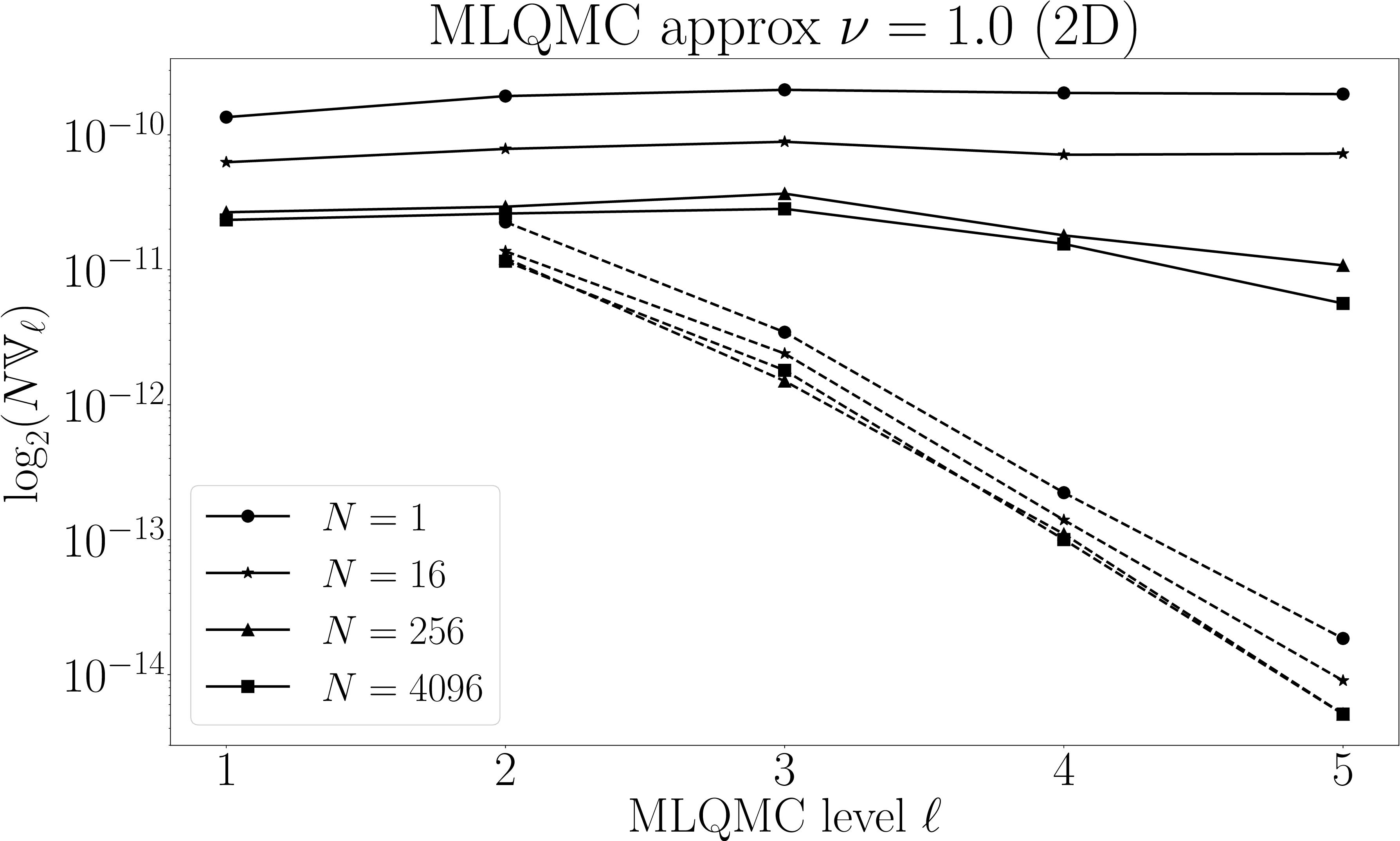}
		\caption{}
		\label{fig:QMC_conv3}
	\end{subfigure}
	\begin{subfigure}{.49\textwidth}
		\includegraphics[width=\textwidth, trim={0cm 0cm 0cm 0cm},clip]{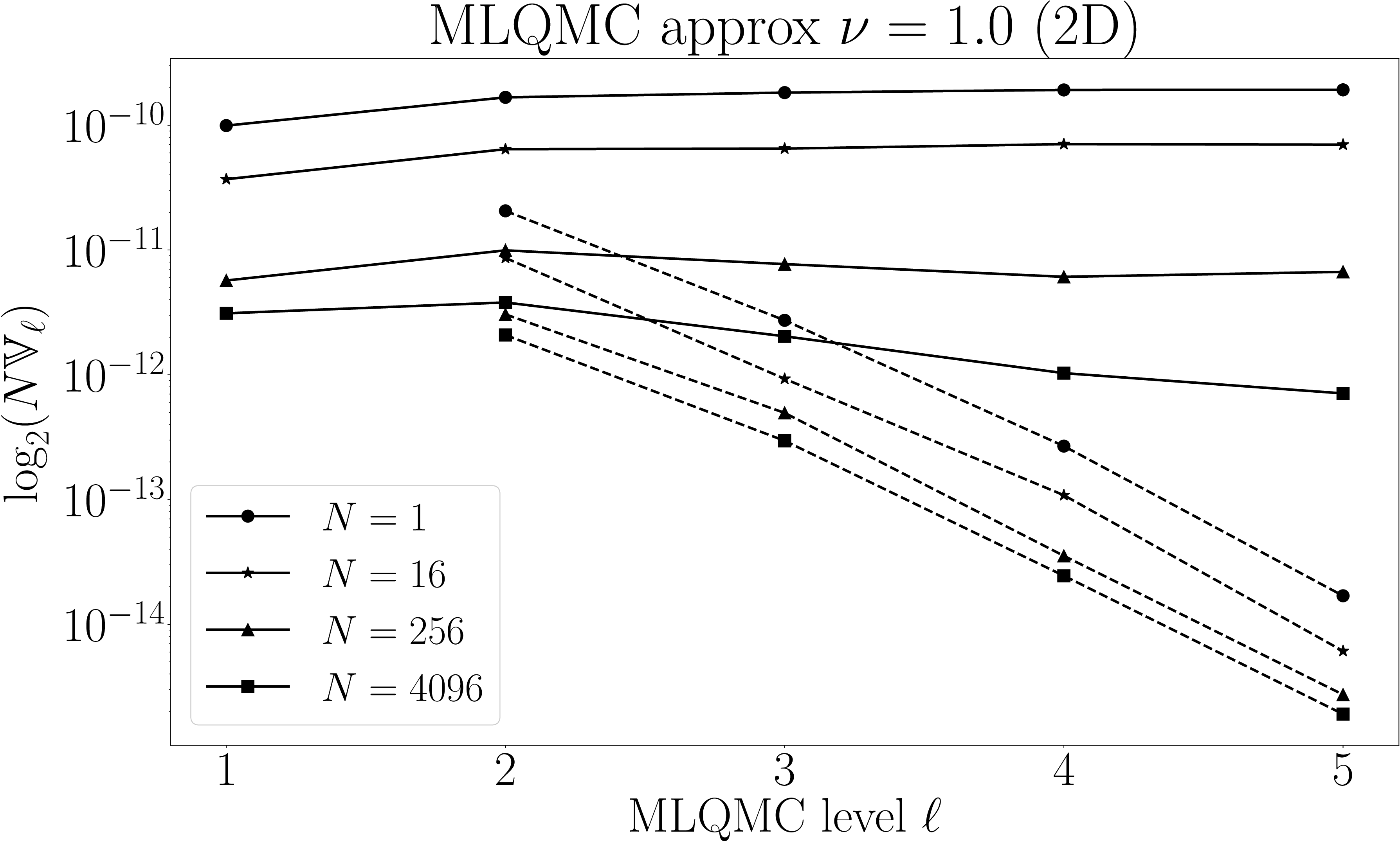}
		\caption{}
		\label{fig:QMC_conv4}
	\end{subfigure}\\
	%\vspace{3pt}
	\begin{subfigure}{.49\textwidth}
		\includegraphics[width=\textwidth, trim={0cm 0cm 0cm 0cm},clip]{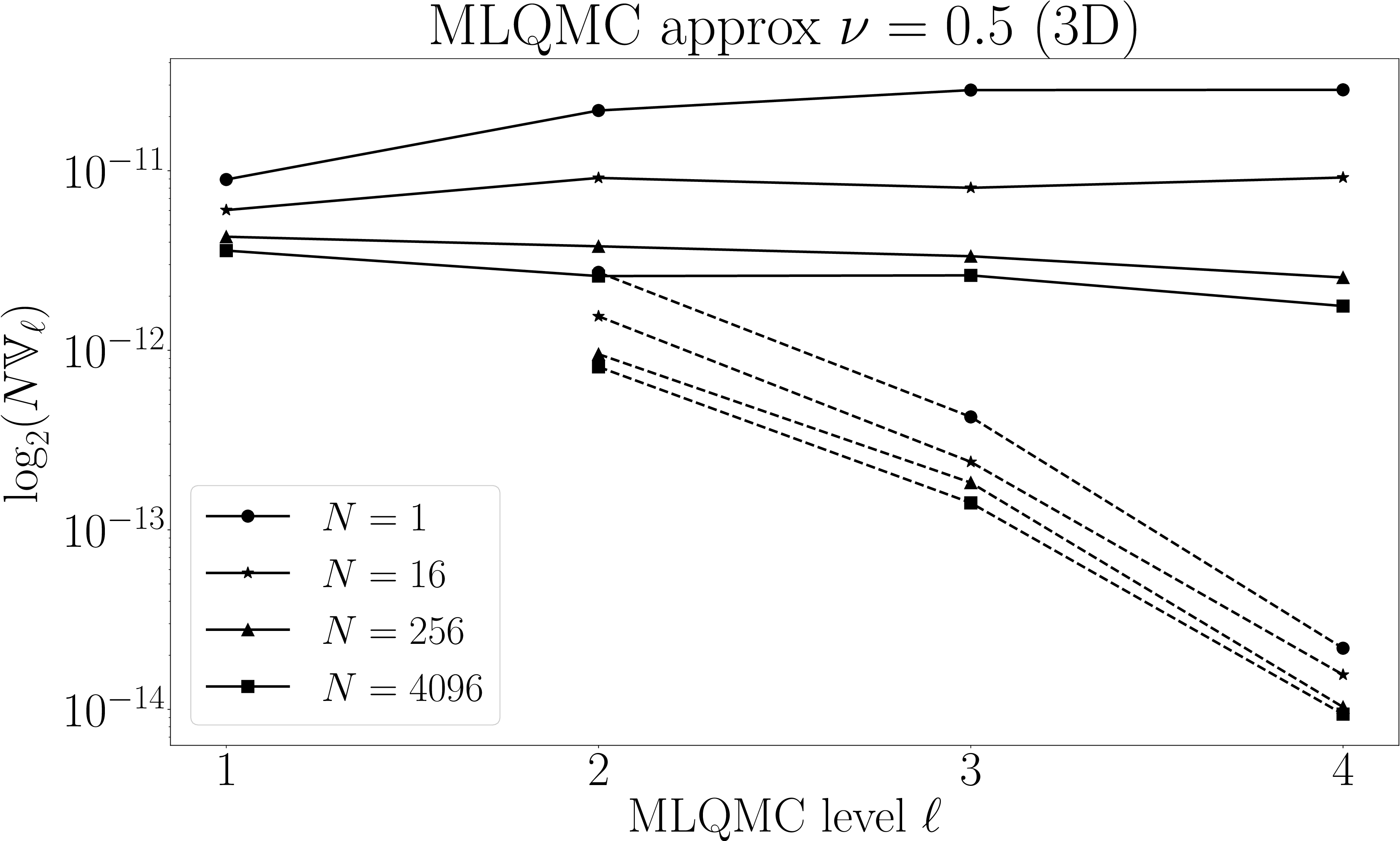}
		\caption{}
		\label{fig:QMC_conv5}
	\end{subfigure}
	\begin{subfigure}{.49\textwidth}
		\includegraphics[width=\textwidth, trim={0cm 0cm 0cm 0cm},clip]{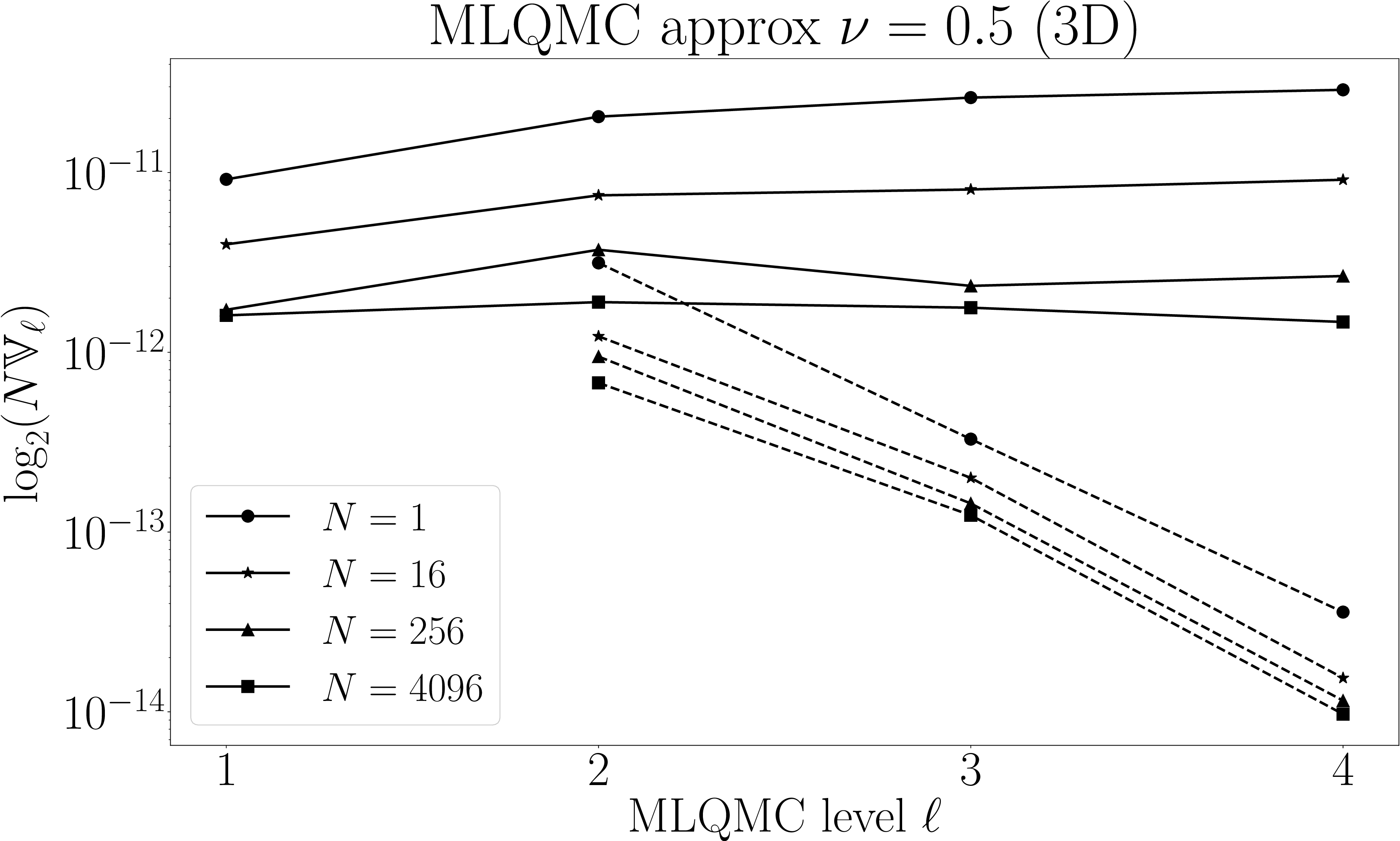}
		\caption{}
		\label{fig:QMC_conv6}
	\end{subfigure}
	\vspace{-16pt}
	\caption{\textit{Convergence behaviour of (ML)QMC with respect to the number of samples $N$ in and 2D (a)-(b) ($M=128$), and in 3D (c)-(d) ($M=64$). The plots on the left are obtained by choosing Haar mesh sizes $|\square_{\mathscr{L}_\ell}|^{1/d} =2^{-\ell}$ in 2D (a) and 3D (c). The plots on the right are obtained by choosing Haar meshes with $|\square_{\mathscr{L}_\ell}|^{1/2} =2^{-(\ell+2)}$ in 2D (b) and $|\square_{\mathscr{L}_\ell}|^{1/3} = 2^{-(\ell+1)}$ in 3D (d). The (approximately) horizontal and oblique lines correspond to QMC and MLQMC respectively. Different markers indicate different sample sizes. On the $y$-axis we monitor (the logarithm of) $N\V_\ell$. %This product is $O(1)$ when the convergence rate is MC-like and therefore the coloured lines would overlap if a $O(N^{-1/2})$ MC rate is observed. In the figure we observe a pre-asymptotic QMC-like convergence rate that then tails off to a standard MC rate (the lines initially do not overlap, but they get closer and closer as $N_\ell$ is increased). This phenomenon always occurs (cf.~theorem \ref{th:hybrid_QMC_conv}), but it happens later when the Haar level is increased (figures on the right).
	}}
	\label{fig:QMC_conv}
	\vspace{-24pt}
\end{figure}

\begin{figure}[h!]
	\vspace{-8pt}
	\centering
	%	\begin{subfigure}{.8\textwidth}
	%		\includegraphics[width=\textwidth, trim={0cm 0cm 0cm 0cm},clip]{Figures/MLQMC/qmc_conv/qmc_conv_test_2D_FEM_2-7_QMC_2-7-scheichl.eps}
	%	\end{subfigure}\\
	\vspace{6pt}
	\begin{subfigure}{.8\textwidth}
		\includegraphics[width=\textwidth, trim={0cm 0cm 0cm 0cm},clip]{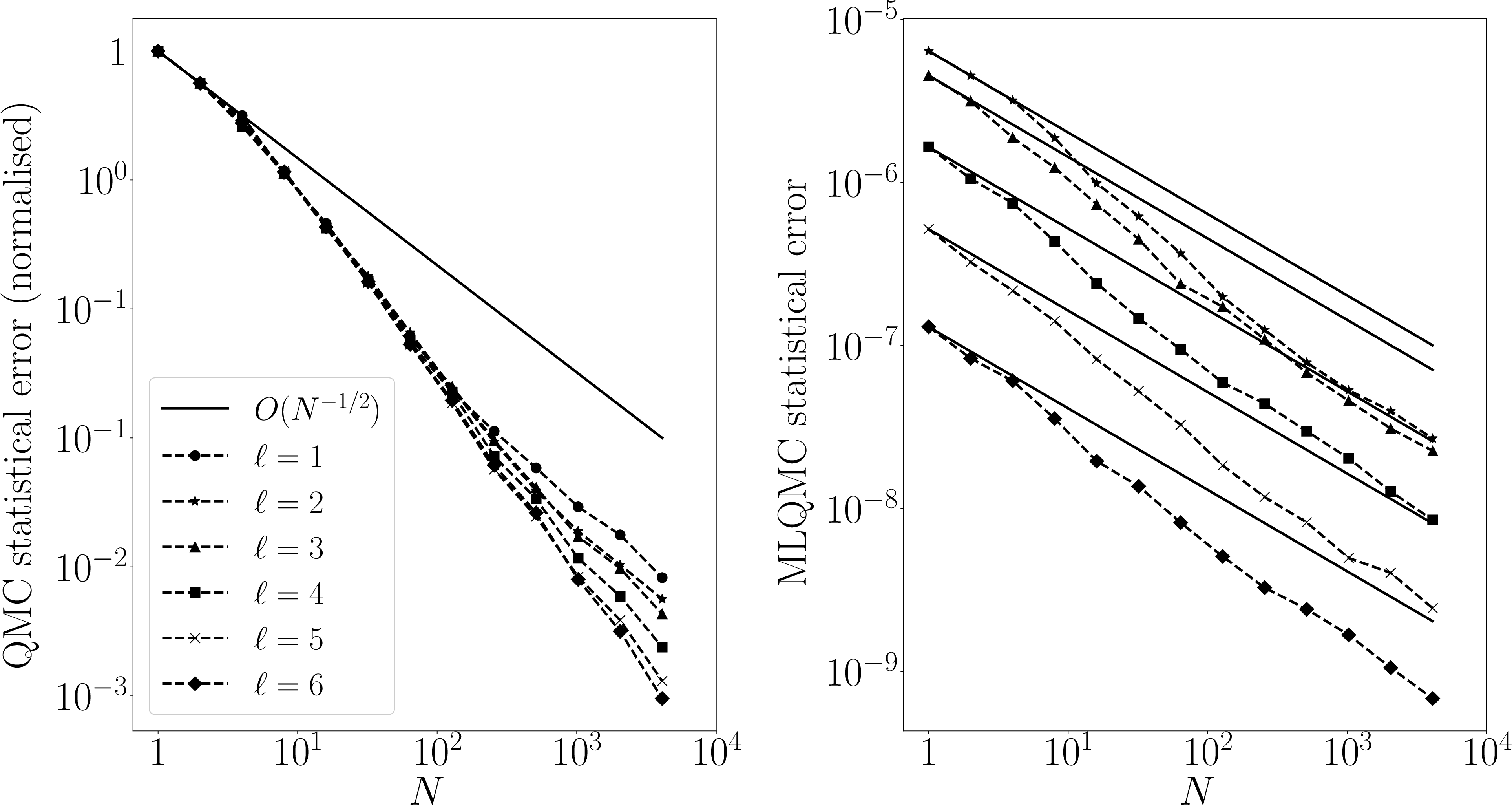}
	\end{subfigure}
	\caption{\textit{Convergence behaviour of (ML)QMC with respect to the number of samples $N$ in 2D. The plots correspond to the same problem and (ML)QMC set-up considered in Figure \ref{fig:QMC_conv4}. The continuous lines correspond to the standard MC rate. In the figure on the left, we plot the standard deviation (SD) of the QMC estimator for $P_\ell$ as a function of the number of samples. In order to make the figures clearer, we normalise this value by dividing by the SD of the estimator when $N=1$. The actual values can be recovered by looking at the horizontal lines in Figure \ref{fig:QMC_conv4}. In the figures on the right, we plot the SD of the MLQMC estimator for $P_\ell-P_{\ell-1}$ as a function of $N$ (dashed lines) and we compare them with the theoretical standard MC convergence behaviour (continuous lines). Overall, we observe a pre-asymptotic QMC rate that eventually decays to a $O(N^{-1/2})$ standard MC rate.
	}}
	\label{fig:QMC_conv_part_2}
	\vspace{-21pt}
\end{figure}

\begin{figure}[h!]
	%\vspace{-16pt}
	\centering
	\begin{subfigure}{.8\textwidth}
		\includegraphics[width=\textwidth, trim={0cm 0cm 0cm 0cm},clip]{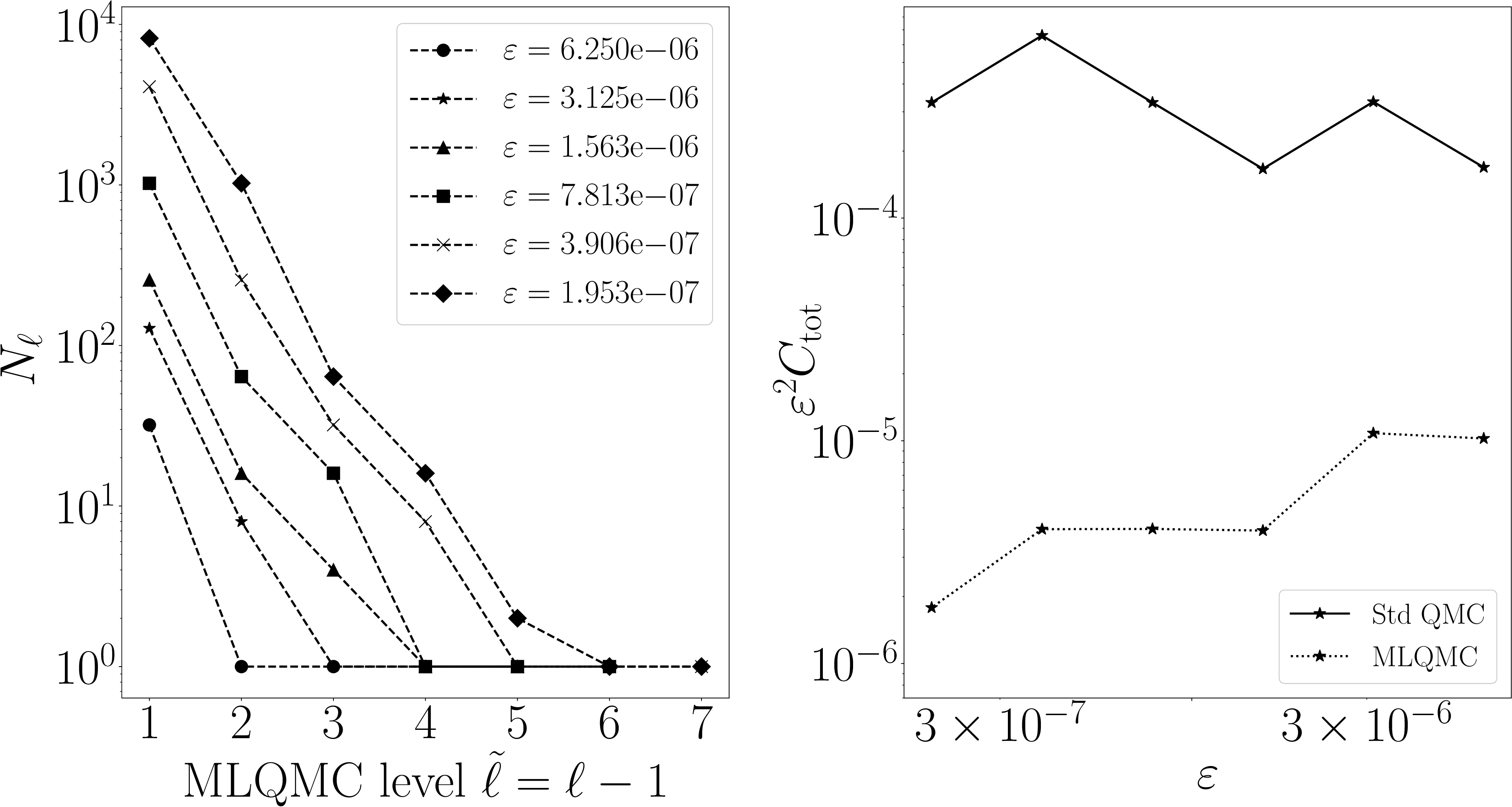}
	\end{subfigure}
	%	\\
	%	\vspace{6pt}
	%	\begin{subfigure}{.8\textwidth}
	%		\includegraphics[width=\textwidth, trim={0cm 0cm 0cm 0cm},clip]{Figures/MLQMC/mlqmc_conv/mlqmc_test_2D_FEM_4-9_QMC_6const-1.eps}
	%	\end{subfigure}
	\caption{\textit{MLQMC convergence for the solution of \eqref{eq:diffusion_eqn_for_QMC_conv_bis}. Here $M=32$ and $\mathscr{L}_\ell = 2 + \ell$. Left plot: we show how the MLQMC algorithm automatically selects the optimal number of samples $N_\ell$ on each level to achieve a given tolerance $\varepsilon$. Note that we have dropped the first mesh of the hierarchy as it is too coarse and it would not bring any significant advantage to the performance of MLQMC (same reasoning as for MLMC, see section 2.6 in \cite{giles2015multilevel}). We observe that on the finest levels only one sample is used, making MLQMC equivalent to plain MLMC on these levels. Right plot: we compare the efficiency of MLQMC with QMC for different tolerances. MLQMC appears to have a better-than-$O(\varepsilon^{-2})$ total cost complexity and significantly outperforms QMC.}}
	\label{fig:MLQMC_conv_1}
	\vspace{-8pt}
\end{figure}

\begin{figure}[h!]
	%\vspace{-16pt}
	\centering
	\includegraphics[width=0.86\textwidth, trim={0cm 0cm 0cm 0cm},clip]{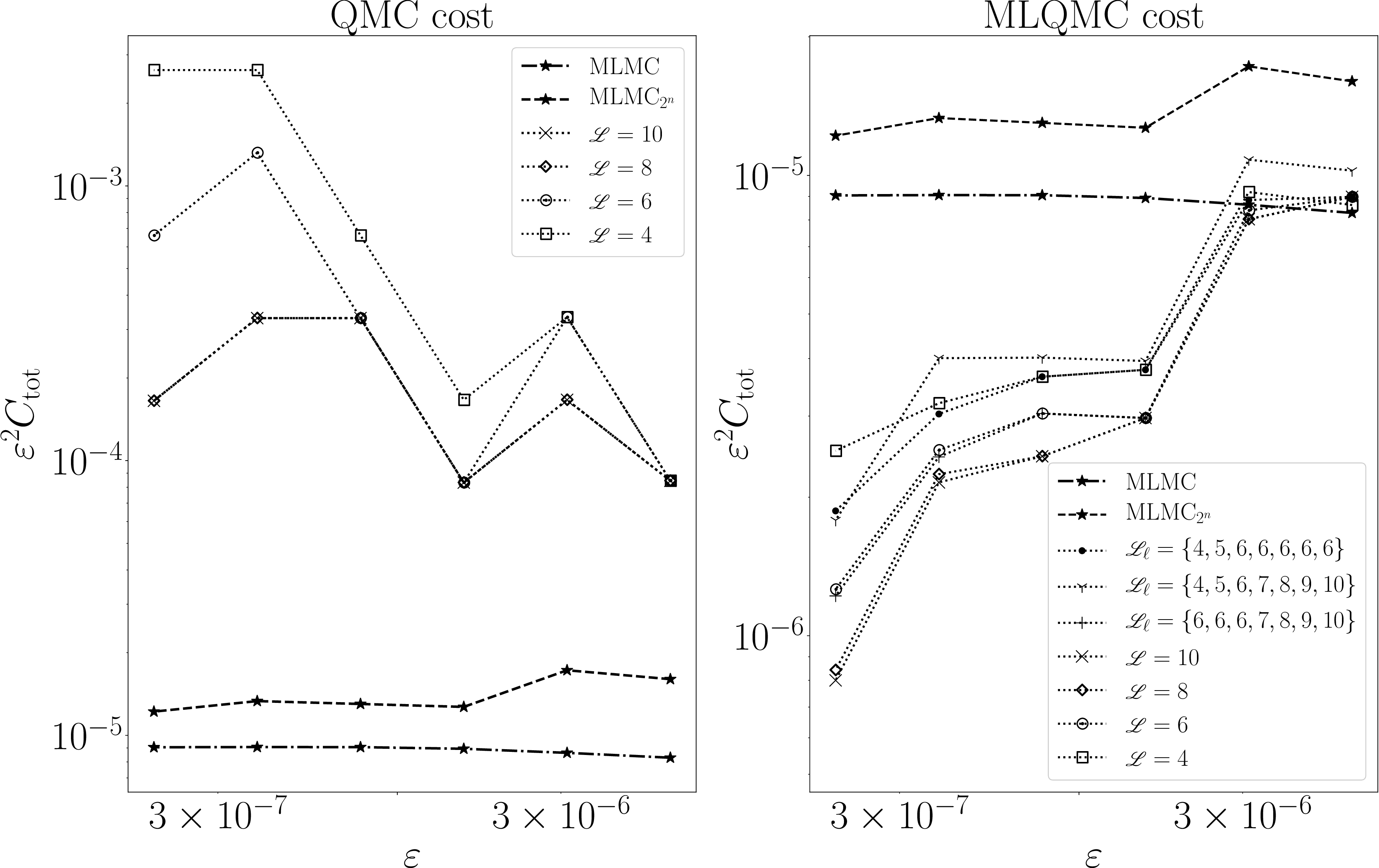}
	\caption{\textit{MLMC, QMC and MLQMC total computational cost needed for the solution of \eqref{eq:diffusion_eqn_for_QMC_conv_bis} with the same FE mesh hierarchy as in figure \ref{fig:MLQMC_conv_1}. In the (ML)QMC case, we take $M=32$ and consider different Haar level hierarchies which correspond to different computational costs. The $x$-axis and the MLMC lines are the same in both plots. MLQMC outperforms MLMC which in turn outperforms QMC.}}
	\label{fig:MLQMC_conv_2}
	\vspace{-20pt}
\end{figure}

In the plot on the right we show the overall cost of QMC and MLQMC as the root mean square error tolerance $\varepsilon$ is reduced. The total cost is computed by taking the cost on each level to be $C_\ell=c_3 2^{\gamma\ell}$, where $c_3$ and $\gamma$ are estimated using CPU timings. More specifically, we plot the quantity $\varepsilon^2C_{\text{tot}}$, where $C_{\text{tot}}$ is the total cost. The reason is that the total cost complexity of MLMC for this problem (MLMC parameters: $\beta = 2\alpha = 4$, $\gamma=2$, cf.~supplementary material \ref{secSM:multilevel_methods}) is $O(\varepsilon^{-2})$, giving the $\varepsilon^2C_{\text{tot}}$ factor to be $O(1)$ for all $\varepsilon$. The fact that the MLQMC cost line is not horizontal, but decreases as $\varepsilon$ is reduced shows that the total complexity of MLQMC is better than $\varepsilon^{-2}$, i.e.~that our MLQMC algorithm has a better-than-MLMC complexity. This improved complexity stems from the fact that we are observing a QMC-like convergence rate with respect to $N_\ell$.

As $\varepsilon$ is decreased, we expect the cost complexity to decay to an $\varepsilon^{-2}$ rate: for extremely fine tolerances very large sample sizes are required yielding the asymptotic $O(N^{-1/2})$ standard MC rate and harming the overall cost complexity. However, even in this case, the overall MLQMC cost benefits from the pre-asymptotic regime and MLQMC still outperforms MLMC (see figure \ref{fig:MLQMC_conv_2}). Similarly, QMC initially benefits from a faster convergence rate with respect to $N$. As the tolerance is decreased, the QMC rate decays to a standard MC rate and the total cost of QMC starts increasing faster than $O(\varepsilon^{-2})$.

We investigate the performance of (ML)QMC as the Haar levels are varied in figure \ref{fig:MLQMC_conv_2}, where we show the total cost of (ML)QMC for different Haar level hierarchies and we compare it with the cost of standard MLMC. The $x$-axis and the black lines in both plots are the same. We present the costs of two versions of MLMC: the black dash-dotted line corresponds to standard MLMC, while the black dashed line corresponds to a MLMC algorithm in which the number of samples are restricted to be in powers of $2$ (this restriction also applies to the MLQMC algorithm we use \cite{GilesWaterhouse2009}). MLQMC outperforms MLMC by a factor of approximately $8$, depending on the Haar level choice.

Recall the convergence results with respect to the number of samples shown in figure \ref{fig:QMC_conv}: even if the convergence rate decreases as $N$ increases, it is clear from figure \ref{fig:MLQMC_conv_1} (left) that this only happens on the coarse levels where more samples are needed. Since for problem \eqref{eq:diffusion_eqn_for_QMC_conv_bis} and the FEM discretization chosen we are in the ``good'' case of the MLMC theorem (i.e.~$\beta>\gamma$, cf.~theorem \ref{th:MLMC} in the supplementary material), the multilevel cost is dominated by the sample cost on the coarse levels. We therefore expect to obtain computational gains by increasing the Haar level on the coarse MLQMC levels. At the same time, we do not expect to lose in computational efficiency if we decrease the Haar level on the fine levels as these are not dominating the total cost. Note that in the QMC case there is only one level and the only possible strategy is to keep the Haar level as high as required.

\begin{remark}
	\label{rem:MLQMC_cost_no_supermesh_cells}
	The results shown in figures \ref{fig:MLQMC_conv_1} and \ref{fig:MLQMC_conv_2} do not account for differences in the cost per sample due to variations in the number of supermesh cells. If the cost of solving the PDE with random coefficients of interest (e.g.~equation \eqref{eq:diffusion_eqn_for_QMC_conv_bis}) dominates over the cost of sampling white noise realizations, these results are still valid as is. Otherwise, extra care must be taken when using Haar meshes which are much finer than the corresponding FE meshes as this results in a large number of supermesh cells. In the figures this would apply to Haar levels greater than $\mathscr{L}_\ell = \{4,\dots,10\}$ (gray line in the plot on the right) and there is clearly a trade-off: larger $\mathscr{L}$ means faster decay with respect to $N$, but larger costs per sample as well.
\end{remark}

By looking at figure \ref{fig:MLQMC_conv_2} it is clear that our expectations are met. In the QMC case (plot on the left) we see that a small Haar level results in significant cost increase for small tolerances, while for large Haar levels we retain good convergence with respect to $N$ and a cost complexity which looks just slightly worse than $O(\varepsilon^{-2})$. For this specific problem, the optimal strategy would be to increase the Haar level as the mesh is refined so that the Haar level is increased only when needed. Generally speaking, we believe that it is never advantageous to use Haar meshes much finer than the FE meshes (cf.~remark \ref{rem:MLQMC_cost_no_supermesh_cells}).

In the MLQMC case (plot on the right in figure \ref{fig:MLQMC_conv_2}), we note that increasing the Haar level on the coarse levels indeed brings computational advantages and decreasing it on the fine levels does not seem to affect the total cost, as predicted. For this specific problem the optimal strategy therefore consists of increasing the Haar level on the coarse levels and either keeping it constant across the MLQMC hierarchy or possibly even decreasing it. For the 2D MLQMC hierarchy, a good choice seems to fix $\mathscr{L}_\ell = 6$ for all $\ell$ since a larger Haar level would significantly increase the number of supermesh cells (cf.~remark \ref{rem:MLQMC_cost_no_supermesh_cells}). A brief discussion about how the Haar levels should be chosen for more generic problems is given in remark \ref{rem:generic_problems_Haar_level_choice}.

Overall, our MLQMC strategy outperforms MLMC, which in turn outperforms QMC. Standard MC is always worse than QMC, by up to two orders of magnitude for small $\varepsilon$ (not shown).

\begin{remark}
	\label{rem:generic_problems_Haar_level_choice}
	The optimal Haar strategy is likely to change if the problem to be solved belongs to the other two cases of the MLMC theorem (theorem \ref{th:MLMC} in the supplementary material), i.e.~$\beta=\gamma$ or $\beta<\gamma$. In the first case  ($\beta = \gamma$), the total multilevel cost is simultaneously dominated by all levels in the multilevel hierarchy and we expect in this case that the optimal strategy is to use a Haar mesh hierarchy of mesh sizes comparable to the FE mesh sizes (e.g.~as for the gray line in the right plot of figure \ref{fig:MLQMC_conv_2}). In the latter case ($\beta < \gamma$), the total multilevel cost is dominated by the fine levels. In this case it might be advantageous to keep the Haar level low on the coarse levels and to increase it on the fine levels.
\end{remark}

\section{Conclusions}
\label{sec:MLQMC_conclusions}
We presented a novel algorithm to efficiently compute the action of white noise and sample Mat\'ern fields within a QMC and MLQMC framework. This algorithm retains the computational efficiency of the MLMC case \cite{Croci2018} and still enforces the required multilevel coupling in a non-nested mesh hierarchy. The numerical results show that our technique works well in practice, that the FEM convergence orders observed agree with the theory and that MLQMC outperforms MLMC and has a better cost complexity in the pre-asymptotic regime. We remark that while the numerical examples considered here only employ (non-nested) structured grids, our QMC algorithm has also successfully been applied to unstructured MRI-derived brain meshes in \cite{CrociVinjeRognes2019bis}. We note that the sampling technique presented extends naturally to
any application in which spatial white noise realizations are needed within a finite element
framework provided that the solution is smooth enough.
An open problem is the derivation of a closed-form expression for the optimal number of samples on each MLQMC level and for the optimal Haar level hierarchy, but we leave this to future research. It would also be interesting to extend the algorithms presented to general higher degree wavelets and domains (cf.~remarks \ref{rem:QMC_generalizations_wavelets} and \ref{rem:QMC_generalizations}). The first enhancement (generic wavelets) could possibly improve the convergence rate with respect to the number of samples, while the second (general domains) would reduce the supermeshing complexity and consequently the white noise sample cost.

\section*{Acknowledgments}
The authors would like to acknowledge useful discussions with Marie Rognes, Robert Scheichl, Endre S\"uli, Abdul-Lateef Haji-Ali, Alberto Paganini and Casper Beentjes. The authors would also like to express their thanks to James R. Maddison for his assistance with the implementation of an interface between libsupermesh and FEniCS.

\printbibliography

\newpage

\appendix
\setcounter{page}{1}

\renewcommand{\appendixname}{}

\renewcommand{\appendixpagename}{\centering \vspace{12pt}\LARGE{Supplementary material}\\\vspace{6pt}\Large{Multilevel quasi Monte Carlo methods for elliptic PDEs with random field coefficients via fast white noise sampling}\\\vspace{6pt}\small{M.~Croci, M.~B.~Giles, P.~E.~Farrell}}
\appendixpage

\section{Multilevel Monte Carlo methods}
\label{secSM:multilevel_methods}

\subsection{MLMC convergence theory}

We briefly summarize the MLMC convergence theory since it is useful to understand some of the considerations drawn in section \ref{sec:MLQMC_num_res}.

The MSE of the MLMC estimator is given by,
\begin{align}
MSE = \hat{V} + E[\hat{P} - P]^2,
\end{align}
where $\hat{P}$ is the MLMC estimator of variance $\hat{V}$. To ensure that $MSE\leq\varepsilon^2$, we enforce the bounds,
\begin{align}
\label{eq:theta}
\hat{V}\leq (1-\theta) \varepsilon^2,\quad E[\hat{P} - P]^2 \leq \theta \varepsilon^2,
\end{align}
where $\theta\in (0,1)$ is a weight, introduced by Haji-Ali et al.~in \cite{haji2016optimization}. Let $C_\ell$, $V_\ell$ be the cost and variance of one sample $(P_\ell - P_{\ell-1})(\omega)$ respectively. Then the total MLMC cost and variance are
\begin{align}
C_{tot} = \sum\limits_{\ell = 1}^LN_\ell C_\ell,\quad \hat{V} = \sum\limits_{\ell = 1}^LN_\ell^{-1}V_\ell.
\end{align}
We can minimise the estimator variance for fixed total cost. For further details we refer to \cite{giles2015multilevel}. This gives that, for a fixed MSE tolerance $\varepsilon^2$, the optimal number of samples for each level and related total cost are,
\begin{align}
\label{eq:optimal_number_of_samples}
N_\ell = \left(\varepsilon^{-2}\sum\limits_{\ell = 1}^L\sqrt{V_\ell C_\ell}\right)\sqrt{V_\ell/C_\ell},\quad
C_{tot} = \varepsilon^{-2}\left(\sum\limits_{\ell = 1}^L\sqrt{V_\ell C_\ell}\right)^2.
\end{align}

We can now compare the cost complexity of standard and multilevel Monte Carlo for the estimation of $\E[P_L]$. Usually, $\V[P_L]=O(V_1)$, then the total cost complexity of standard MC is $O(\varepsilon^{-2}V_1C_L)$. According to how the product $V_\ell C_\ell$ varies with level, we can have three different scenarios for MLMC:
\begin{enumerate}[leftmargin=1cm]
	\item The product $V_\ell C_\ell$ increases with level ($\gamma > \beta$ in theorem \ref{th:MLMC}, to follow). Then, to leading order, the total MLMC cost is $O(\varepsilon^{-2}V_LC_L)$, for an improvement in computational cost over standard Monte Carlo by a $V_1/V_L$ factor. In this case the total cost is dominated by the fine levels.
	\item The product is asymptotically constant with the level. Then, we have a MLMC total cost of order $O(\varepsilon^{-2}L^2V_LC_L) = O(\varepsilon^{-2}L^2V_1C_1)$ ($\gamma = \beta$ in theorem \ref{th:MLMC}, to follow). This gives a cost improvement of $V_1/(L^2V_L)\approx C_L/(L^2C_1)$ with respect to standard MC. The MLMC cost here is equally dominated by all the levels in the hierarchy.
	\item The product decreases with the level ($\gamma < \beta$ in theorem \ref{th:MLMC}, to follow). Then, $C_{tot} \approx O(\varepsilon^{-2}V_LC_1)$, for an improvement of $C_L/C_1$. For example this could be the ratio between a fine mesh PDE solution cost and a coarse mesh PDE solution cost, which is generally quite significant. In this case, the MLMC cost is dominated by the coarser levels.
\end{enumerate}

The convergence of MLMC is ensured by the following theorem.
\begin{theorem}[\cite{giles2015multilevel}, theorem $1$]
	\label{th:MLMC}
	Let $P\in L^2(\Omega,\R)$ and let $P_\ell$ be its level $\ell$ approximation. Let $Y_\ell$ be the MC estimator of $\E[P_\ell - P_{\ell-1}]$ on level $\ell$ such that
	\begin{align}
	\E[Y_\ell] = \E[P_\ell - P_{\ell-1}],
	\end{align}
	with $P_{0} = 0$, and let $C_\ell$ and $V_\ell$ the expected cost and variance of each of the $N_\ell$ Monte Carlo samples needed to compute $Y_\ell$. If the estimators $Y_\ell$ are independent and there exist positive constants $\alpha$, $\beta$, $\gamma$, $c_1$, $c_2$, $c_3$, such that $\alpha\geq \frac{1}{2}\min(\beta,\gamma)$ and
	\begin{align}
	\label{eq:MLMC_theorem_bounds}
	|\E[P_\ell - P]|\leq c_12^{-\alpha\ell},\qquad V_\ell\leq c_2 2^{-\beta\ell},\qquad C_\ell\leq c_32^{\gamma\ell},
	\end{align}
	then there exist a positive constant $c_4$ such that, for all $\varepsilon < e^{-1}$, there is a level number $L$ and number of samples $N_\ell$, such that the MLMC estimator
	\begin{align}
	\hat{P}=\sum\limits_{\ell=1}^L Y_\ell,
	\end{align}
	has MSE with bound,
	\begin{align}
	MSE = \E[(\hat{P} - \E[P])^2]\leq \varepsilon^2,
	\end{align}
	with a total computational complexity with bound,
	\begin{align}
	\E[C_{tot}] \leq \left\{\begin{array}{lr} c_4\varepsilon^{-2}, & \beta > \gamma,\\c_4\varepsilon^{-2}(\log\varepsilon)^2, & \beta = \gamma,\\c_4\varepsilon^{-2 - (\gamma - \beta)/\alpha}, & \beta < \gamma.\end{array}\right.
	\end{align}
\end{theorem}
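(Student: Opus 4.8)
The plan is to split the mean square error into its variance and squared-bias parts via the standard decomposition
\begin{align}
MSE = \V[\hat{P}] + \left(\E[\hat{P}] - \E[P]\right)^2,
\end{align}
and to control each piece separately so that the variance is at most $(1-\theta)\varepsilon^2$ and the squared bias at most $\theta\varepsilon^2$ for some fixed $\theta\in(0,1)$ as in \eqref{eq:theta} (one may simply take $\theta = 1/2$). Because the $Y_\ell$ are unbiased and the sum telescopes, $\E[\hat{P}] = \E[P_L]$, so the bias equals $\E[P_L - P]$, which by the first bound in \eqref{eq:MLMC_theorem_bounds} is at most $c_1 2^{-\alpha L}$. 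First I would choose the finest level $L$ to be the smallest integer for which $c_1 2^{-\alpha L} \leq \sqrt{\theta}\,\varepsilon$; this forces $L = O(\log(\varepsilon^{-1}))$, and more precisely $2^{-\alpha L} \asymp \varepsilon$, equivalently $2^L \asymp \varepsilon^{-1/\alpha}$, a relation I will use repeatedly in the cost count.

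Next I would control the variance $\V[\hat{P}] = \sum_{\ell=1}^L N_\ell^{-1} V_\ell$ by treating the $N_\ell$ as continuous and minimising $C_{tot} = \sum_\ell N_\ell C_\ell$ subject to $\sum_\ell N_\ell^{-1} V_\ell = (1-\theta)\varepsilon^2$. A Lagrange multiplier (or Cauchy--Schwarz) argument gives the optimal choice $N_\ell \propto \sqrt{V_\ell/C_\ell}$, namely the $N_\ell$ of \eqref{eq:optimal_number_of_samples} up to the $\theta$-dependent constant, and yields $C_{tot} = (1-\theta)^{-1}\varepsilon^{-2}\left(\sum_{\ell=1}^L \sqrt{V_\ell C_\ell}\right)^2$. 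The whole estimate therefore reduces to bounding the scalar sum $\sum_{\ell=1}^L \sqrt{V_\ell C_\ell}$.

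Inserting $V_\ell \leq c_2 2^{-\beta\ell}$ and $C_\ell \leq c_3 2^{\gamma\ell}$ gives $\sqrt{V_\ell C_\ell} \leq \sqrt{c_2 c_3}\,2^{(\gamma-\beta)\ell/2}$, so the sum is geometric with ratio $2^{(\gamma-\beta)/2}$ and splits into the three regimes. When $\beta > \gamma$ the series converges as $L\to\infty$, so $\sum_\ell\sqrt{V_\ell C_\ell} = O(1)$ and $C_{tot} = O(\varepsilon^{-2})$. When $\beta = \gamma$ each summand is $O(1)$, the sum is $O(L) = O(\log\varepsilon^{-1})$, and $C_{tot} = O(\varepsilon^{-2}(\log\varepsilon)^2)$. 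When $\beta < \gamma$ the series is dominated by its last term $\asymp 2^{(\gamma-\beta)L/2}$; squaring and substituting $2^L \asymp \varepsilon^{-1/\alpha}$ gives $C_{tot} = O(\varepsilon^{-2}\,2^{(\gamma-\beta)L}) = O(\varepsilon^{-2-(\gamma-\beta)/\alpha})$.

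The main obstacle is the step I have so far glossed over: the $N_\ell$ must be positive integers, so I would replace each by $\lceil N_\ell\rceil \leq N_\ell + 1$, which also guarantees at least one sample per level. The extra $+1$ contributes an additional cost of at most $\sum_{\ell=1}^L C_\ell \leq c_3\sum_{\ell=1}^L 2^{\gamma\ell} = O(2^{\gamma L}) = O(\varepsilon^{-\gamma/\alpha})$, and the crux is to verify that this rounding overhead is dominated by the main term in every case. This is exactly where the hypothesis $\alpha \geq \tfrac{1}{2}\min(\beta,\gamma)$ enters: in the regime $\beta > \gamma$ it gives $\gamma/\alpha \leq 2$ so $\varepsilon^{-\gamma/\alpha} = O(\varepsilon^{-2})$, and in the regime $\beta < \gamma$ it gives $\beta \leq 2\alpha$, hence $\gamma/\alpha \leq 2 + (\gamma-\beta)/\alpha$ so that $\varepsilon^{-\gamma/\alpha} = O(\varepsilon^{-2-(\gamma-\beta)/\alpha})$; the borderline case $\beta = \gamma$ is immediate. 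Combining the integer feasibility of the chosen $N_\ell$ with the variance and bias bounds then yields $MSE \leq \varepsilon^2$ at the stated cost, and absorbing all remaining constants into a single $c_4$ completes the argument.
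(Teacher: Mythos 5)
Your proof is correct and takes essentially the same route as the paper: the paper itself imports this theorem from Giles (2015, Theorem 1) without reproving it, but its supplementary material sketches precisely your argument --- the MSE splitting with weight $\theta$, the optimal choice $N_\ell \propto \sqrt{V_\ell/C_\ell}$ giving $C_{tot} = O\bigl(\varepsilon^{-2}\bigl(\sum_{\ell=1}^L\sqrt{V_\ell C_\ell}\bigr)^2\bigr)$, and the three geometric regimes for $\sum_{\ell}\sqrt{V_\ell C_\ell}$. Your completion of the details the paper omits (the choice of $L$ with $2^{L} \asymp \varepsilon^{-1/\alpha}$ from the bias bound, and the integer rounding of $N_\ell$, which is exactly where the hypothesis $\alpha \geq \tfrac{1}{2}\min(\beta,\gamma)$ is needed to absorb the $O(\varepsilon^{-\gamma/\alpha})$ overhead) matches the standard proof in the cited reference.
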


\subsection{MLQMC algorithm}

Let $C_\ell$ be the cost of evaluating $Y_\ell$ and let $\V_\ell=\V[I_{N_\ell}^{m,\ell}]$, where $I_{N_\ell}^{m,\ell}$ is given in \eqref{eq:mlqmc_level_estimator}. The MLQMC algorithm we adopt, taken from \cite{GilesWaterhouse2009}, is the following.\vspace{6pt}

\paragraph{MLQMC algorithm (taken from \cite{GilesWaterhouse2009})}
\begin{enumerate}[leftmargin=1.5cm]
	\item Set the required tolerance $\varepsilon$, $\theta\in(0,1)$, the minimum and maximum level $L_{\min}$ and $L_{\max}$ and the initial number of levels to be $L=1$.
	\item Get an initial estimate of $\V_L$ with $N_L=1$ and $M=32$ randomizations.
	\item While $\sum\limits_{\ell=1}^L \V_\ell > (1-\theta)\varepsilon^2$, double $N_\ell$ on the level with largest $\V_\ell/(C_\ell N_\ell)$.
	\item If $L<L_{\min}$ or the bias estimate is greater than $\sqrt{\theta}\varepsilon$, set $L=L+1$. If $L \leq L_{\max}$ go to 2, otherwise report convergence failure.
\end{enumerate}

\begin{remark}[Adapted from \cite{GilesWaterhouse2009}]
	The term $\sum_{\ell=1}^L \V_\ell$ is the total estimator variance and the variable $\theta$ is a weight with the same meaning as in the MLMC case. The choice of $N_\ell$ on each level is heuristic: doubling the number of samples will eliminate (independently on whether we are in an MC or QMC-like convergence rate regime) most of the estimated variance $\V_\ell$ on level $\ell$ at a cost $N_\ell C_\ell$ and we therefore double the number of samples on the level that offers the largest variance reduction per unit cost.
\end{remark}

\section{A partial QMC convergence result}
\label{sec:partial_QMC_convergence_result}

The white noise sampling strategy we presented is hybrid in the sense that both randomized QMC and pseudo-random sequences are used. It is then unclear what the order of convergence with respect to the number of samples should be. The hope is, of course, to achieve something better than the standard MC rate of convergence. Proving convergence results with respect to the number of samples for QMC (and MLQMC) in a PDE setting is non-trivial and results have so far been established only for a limited class of QMC point sequences \cite{KuoScheichlSchwabEtAl2017,HerrmannSchwab2017}.

Although deriving a convergence estimate is outside of the scope of this work, in what follows we try to build up intuition about what is likely to be happening in practice. As an example problem, consider the following elliptic PDE with log-normal diffusion coefficient (already in weak form): find $p(\cdot,\omega)\in H^1_0(G)$ such that for all $v\in H^1_0(G)$,
\begin{align}
\label{eq:diffusion_eqn_for_QMC_conv}
a(p,v) = (D^*(\bm{x},\omega)\nabla p, \nabla v) = (f,v),\quad \text{a.s.},\quad D^*(\bm{x},\omega) = e^{u(\bm{x},\omega)},
\end{align}
where for a given sampling domain $D\subset\joinrel\subset \R^d$ we have that $G\subset\joinrel\subset D$ is a domain of class $C^{1,\epsilon}$ for any $\epsilon>0$, $f\in L^\infty(G)$ (these hypotheses imply $p\in W^{1,\infty}(G)$ a.s., see theorem $8.34$ in \cite{GilbargTrudinger}) and $u(\bm{x},\omega)\in H^{\nu-\epsilon}(G) \cap C^{0}(\bar{G})$ \cite{Bolin2017,Buckdahn1989} is a Mat\'ern field satisfying equation \eqref{eq:white_noise_SPDE_reminder} over $D$. In addition, assume that we are interested in computing the expectation of a possibly nonlinear output functional of $p$, namely $\mathcal{P}(p)$. We will now establish the following result:
\begin{theorem}
	\label{th:hybrid_QMC_conv}
	Let $p\in W^{1,\infty}(G)$ a.s.~be the solution of \eqref{eq:diffusion_eqn_for_QMC_conv} where $u$ has been sampled using the hybrid QMC technique presented in this paper. Let $u_\mathscr{L}$ be the solution of \eqref{eq:white_noise_SPDE_reminder} obtained by using the same $\W_\mathscr{L}$ sample as for $u$ and by setting $\W_R=0$, and let ${p_\mathscr{L}(\cdot,\omega)\in H^1_0(G)}$ satisfy, for all $v\in H^1_0(G)$,
	\begin{align}
	a_\mathscr{L}(p_\mathscr{L},v) = (D^*_\mathscr{L}(\bm{x},\omega)\nabla p_\mathscr{L}, \nabla v) = (f,v),\ \ \text{a.s.},\quad D^*_\mathscr{L}(\bm{x},\omega) = e^{u_\mathscr{L}(\bm{x},\omega)}.
	\end{align}
	Let $\mu=\min(\nu,1)$ and let $\mathscr{L}$ be the Haar level used to sample $u$. Assume that there exist $\alpha,\beta\geq 1$ independent from $\mathscr{L}$ such that $||u_{\mathscr{L}}||_{L^{\infty}(G)}\leq \alpha||u||_{L^{\infty}(G)}$ and $||\nabla p_{\mathscr{L}}||_{L^{\infty}(G)}\leq \beta||\nabla p||_{L^{\infty}(G)}$. Further assume that the functional $\mathcal{P}$ is continuously Fr\'echet differentiable and let $\hat{P}, \hat{P}_\mathscr{L}$ and $\widehat{P - P_\mathscr{L}}$ be the QMC estimators for $\E[P]$, $\E[P_\mathscr{L}]$ and $\E[P - P_\mathscr{L}]$ respectively, obtained by using $N$ QMC points. Here, $P = \mathcal{P}(p)$ and $P_\mathscr{L} = \mathcal{P}(p_\mathscr{L})$. If there exist constants $c>0$, $q\geq 1$, and $N_0\geq1$ such that the QMC estimators satisfy for $N>N_0$,
	\begin{align}
	\label{eq:th_hybrid_QMC_hp}
	\V[\hat{P}]\leq c\frac{\V[P]}{N},\quad \V[\hat{P}_\mathscr{L}]\leq c\frac{\V[P_\mathscr{L}]}{N^q},\quad \V[\widehat{P-P_\mathscr{L}}]\leq c\frac{\V[P-P_\mathscr{L}]}{N},
	\end{align}
	i.e.~the QMC estimators are never asymptotically worse than standard Monte Carlo, then there exists a sufficiently large integer constant $\mathscr{L}_0$ such that the statistical error $\V[\hat{P}]$ also satisfies for $N> N_0$, $\mathscr{L}>\mathscr{L}_0$,
	\begin{align}
	\V[\hat{P}] \leq \hat{c}\frac{\V[P]}{N^q} + \frac{\bar{c}}{N}2^{-\mu \mathscr{L}},
	\end{align}
	where $\hat{c},\bar{c} > 0$ are independent from $\mathscr{L}$ and $N$.
\end{theorem}

\begin{remark}
	Condition \eqref{eq:th_hybrid_QMC_hp} is satisfied by most randomized QMC point sets, e.g.~randomized digital nets and sequences and randomly shifted lattice rules \cite{Owen1995,LEcuyer2000}.
\end{remark}

\begin{proof}
	In this proof, we work pathwise for fixed $\omega\in\Omega$. We start with essentially the same duality argument that yields lemma $3.2$ in \cite{TeckentrupMLMC2013}. Let $v$, $\bar{v} \in H^1(G)$ and let $D_v\mathcal{P}(\bar{v})$ be the Gateaux derivative of $\mathcal{P}$ at $\bar{v}$, namely
	\begin{align}
	D_v\mathcal{P}(\bar{v}) := \lim\limits_{\epsilon\rightarrow 0} \frac{\mathcal{P}(\bar{v} + \epsilon v) - \mathcal{P}(\bar{v})}{\epsilon}.
	\end{align}
	Define the average derivative of $\mathcal{P}$ on the path from $p$ to $p_\mathscr{L}$,
	\begin{align}
	\overline{D_v\mathcal{P}}(p,p_\mathscr{L}) = \int\limits_0^1D_v\mathcal{P}(p + \theta(p_\mathscr{L}-p))\text{ d}\theta,
	\end{align}
	and introduce the dual problem: find $z(\cdot, \omega)\in H^1_0(G)$ s.t.
	\begin{align}
	a(v,z) = \overline{D_v\mathcal{P}}(p,p_\mathscr{L}),\quad\forall v\in H^1_0(G).
	\end{align}
	The fundamental theorem of calculus for Fr\'echet derivatives then yields,
	\begin{align}
	P - P_\mathscr{L} = \int\limits_0^1D_{p-p_\mathscr{L}}\mathcal{P}(p + \theta(p_\mathscr{L}-p))\text{ d}\theta = \overline{D_{p-p_\mathscr{L}}\mathcal{P}}(p,p_\mathscr{L}) = a(p-p_\mathscr{L},z),\ \ \text{a.s.}
	\end{align}
	Applying H\"older's inequality gives,
	\begin{align}
	\label{eq:hybrid_qmc_proof_part1}
	|P-P_\mathscr{L}| = |a(p-p_\mathscr{L},z)| \leq ||D^*||_{L^\infty(G)}|z|_{H^1(G)}|p-p_\mathscr{L}|_{H^1(G)}\quad\text{a.s.}
	\end{align}
	We now need a bound for $|p-p_\mathscr{L}|_{H^1(G)}$. Note that, a.s. for all $v\in H^1_0(G)$,
	\begin{align}
	0 = a(p,v)-a_\mathscr{L}(p_\mathscr{L},v) = a(p-p_\mathscr{L},v) + a(p_\mathscr{L},v)-a_\mathscr{L}(p_\mathscr{L},v).
	\end{align}
	Setting $v=p-p_\mathscr{L}$ gives
	\begin{align}
	0 \leq a(p-p_\mathscr{L},p-p_\mathscr{L}) = 
	a_\mathscr{L}(p_\mathscr{L},v) - a(p_\mathscr{L},v) = ((D^*_\mathscr{L}-D^*)\nabla p_\mathscr{L},\nabla(p-p_\mathscr{L}))\quad\text{a.s.}
	\end{align}
	Both quantities can be bounded as follows: let $D^*_{\min}(\omega) = \inf_{x\in D}|D^*(\cdot,\omega)|$,
	\begin{align}
	0 \leq D^*_{\min}|p-p_\mathscr{L}|^2_{H^1_0(G)} &\leq a(p-p_\mathscr{L},p-p_\mathscr{L})\\
	((D^*_\mathscr{L}-D^*)\nabla p_\mathscr{L},\nabla(p-p_\mathscr{L})) &\leq ||D^*-D^*_\mathscr{L}||_{L^2(G)}||\nabla p_\mathscr{L}||_{L^\infty(G)}|p-p_\mathscr{L}|_{H^1(G)},
	\end{align}
	almost surely, hence yielding, after division by $|p-p_\mathscr{L}|_{H^1(G)}$,
	\begin{align}
	\label{eq:hybrid_qmc_proof_part2}
	|p-p_\mathscr{L}|_{H^1(G)} \leq \beta\frac{||D^*||_{L^\infty(G)}^\alpha}{D^*_{\min}}||\nabla p||_{L^\infty(G)}||u-u_\mathscr{L}||_{L^2(G)},\quad\text{a.s.},
	\end{align}
	for some $\alpha,\beta \geq 1$. Here we used the fact that
	%since $u_\mathscr{L}$ converges to $u$ in $C^{2-d/2-\epsilon}(D)$ for any $\epsilon > 0$ \cite{CoxKirchner2019} (and consequently in $C^{2-d/2-\epsilon}(G)$), there exists a Haar level $\mathscr{L}_0$ such that for all ${\mathscr{L}>\mathscr{L}_0}$
	by assumption
	there exists a constant $\alpha\geq 1$ independent from $\mathscr{L}$ such that ${||u_\mathscr{L}||_{L^\infty(G)} \leq \alpha||u||_{L^\infty(G)}}$, hence
	\begin{align}
	||D^*-D^*_\mathscr{L}||_{L^2(G)} &\leq e^{\max(||u||_{L^\infty(G)},||u_\mathscr{L}||_{L^\infty(G)})} ||u-u_\mathscr{L}||_{L^2(G)}\\
	&\leq ||D^*||_{L^\infty(G)}^\alpha||u-u_\mathscr{L}||_{L^2(G)},\ \text{a.s.}
	% \sup\limits_\theta ||e^{u + \theta(u_\mathscr{L}-u)}||_{L^\infty(G)}||u-u_\mathscr{L}||_{L^2(G)}
	\end{align}
	Similarly, we also used the assumption that $||\nabla p_{\mathscr{L}}||_{L^\infty(G)}\leq \beta||\nabla p||_{L^\infty(G)}$.
	Putting \eqref{eq:hybrid_qmc_proof_part1} and \eqref{eq:hybrid_qmc_proof_part2} together yields,
	\begin{align}
	\label{eq:th_hybrid_QMC1}
	|P-P_\mathscr{L}| \leq C(\omega)||u-u_\mathscr{L}||_{L^2(G)},\ \ \text{a.s.},
	\end{align}
	where $C(\omega)$ is given by
	\begin{align}
	C(\omega) = \beta\frac{||D^*||_{L^\infty(G)}^{\alpha+1}}{D^*_{\min}}||\nabla p||_{L^\infty(G)}|z|_{H^1(G)}.
	\end{align}
	Note that since all terms involved are in $L^t(\Omega,\R)$ for all $t\in(0,\infty)$ (see proposition 2.4 and theorem 3.4 in \cite{CharrierMLMC2013}), we have that $C(\omega)\in L^t(\Omega,\R)$ for all $t\in[1,\infty)$ due to the generalised H\"older inequality.
	We now note that
	\begin{align}
	\label{eq:th_hybrid_QMC_step1}
	\V[\hat{P}] &= \V[\hat{P}_\mathscr{L} + \hat{P}-\hat{P}_\mathscr{L}] = \V[\hat{P}_\mathscr{L}] + \text{Cov}(\hat{P} + \hat{P}_\mathscr{L}, \hat{P} - \hat{P}_\mathscr{L}) \notag \\
	&\leq \V[\hat{P}_\mathscr{L}] + \V[\hat{P} + \hat{P}_\mathscr{L}]^{1/2}\V[(\widehat{P -P_\mathscr{L}})]^{1/2}\\
	&\leq cc_1\frac{\V[P]}{N^q} + \frac{c\tilde{c}}{N}\V[P]^{1/2}\V[P-P_\mathscr{L}]^{1/2}.
	\end{align}
	where we used the Cauchy-Schwarz inequality, hypothesis \eqref{eq:th_hybrid_QMC_hp} and the fact that since $P$ converges to $P_\mathscr{L}$ as $\mathscr{L}\rightarrow\infty$ a.s. and in $L^t(\Omega, \R)$ for $t\in[1,\infty)$, there exists a Haar level $\mathscr{L}_0$ s.t.~for all $\mathscr{L}>\mathscr{L}_0$ there exists $c_1>0$ s.t. $\V[P_\mathscr{L}] \leq c_1\V[P]$ and consequently another constant $\tilde{c}>0$ s.t. ${\V[P+P_\mathscr{L}]^{1/2} \leq \tilde{c}\V[P]^{1/2}}$.
	Combining equation \eqref{eq:th_hybrid_QMC1} with Cauchy-Schwarz and the embedding $L^4(G) \subset L^2(G)$ gives that
	\begin{align}
	\label{eq:th_hybrid_QMC_step2}
	\V[P-P_\mathscr{L}]^{1/2} \leq |D|^{1/4}\E[C(\omega)^4]^{1/4}\E[||u-u_\mathscr{L}||_{L^4(G)}^4]^{1/4}.
	\end{align}
	Now, owing to lemma 5.5 in \cite{CrociPHD}, we have that
	\begin{align}
	\label{eq:th_hybrid_QMC_step3}
	\E[||u-u_\mathscr{L}||_{L^4(G)}^4]^{1/4} \leq \bar{C}(s,D,d)|\square_H|^{\mu/d},
	\end{align}
	for some constant $\bar{C}$. Note that by construction $|\square_H| = 2^{-d\mathscr{L}}$. We can now pull together equations \eqref{eq:th_hybrid_QMC_step1}, \eqref{eq:th_hybrid_QMC_step2} and \eqref{eq:th_hybrid_QMC_step3} to obtain,
	\begin{align}
	\V[\hat{P}] \leq \hat{c}\frac{\V[P]}{N^q} + \frac{\bar{c}}{N}2^{-\mu \mathscr{L}},
	\end{align}
	where $\hat{c}=cc_1$ and
	\begin{align}
	\bar{c} = c\tilde{c}|D|^{1/4}\bar{C}(s,D,d)\E[C(\omega)^4]^{1/4}\V[P]^{1/2}.
	\end{align}
	and this concludes the proof.
\end{proof}

Theorem \ref{th:hybrid_QMC_conv} states that the statistical error introduced by approximating $\E[P]$ with our hybrid QMC technique can be split in two terms, where the former is the statistical error of a pure randomized QMC estimator and might converge faster than $O(N^{-1/2})$ and the second is a standard MC error correction term that exhibits the usual Monte Carlo rate, but decays geometrically as the Haar level increases. This splitting of the error directly relates to the splitting of white noise as the first term only depends on the truncation $\W_\mathscr{L}$. If $\W_\mathscr{L}$ well approximates $\W$, then we expect a pure QMC rate in the pre-asymptotic regime, while if the approximation is poor (small $\mathscr{L}$), then only a $O(N^{-1/2})$ rate can be expected.

\begin{remark}
	Another way of interpreting our hybrid approach is that we are splitting white noise into a smooth part $\W_\mathscr{L}$ and a rough part $\W_R$. QMC is effective at reducing the statistical error coming from the smooth part, but performs poorly when approximating the rough part and we are better off with directly using pseudo-random points. This aspect was experimentally investigated in \cite{Beentjes2018} and can be seen as another instance of the \emph{effective dimensionality} principle mentioned in section \ref{sec:background}.
\end{remark}

As we see in section \ref{sec:MLQMC_num_res}, even if the asymptotic rate is still $O(N^{-1/2})$, large gains are still obtained in practical experiments in the pre-asymptotic QMC-like regime, especially in a MLQMC setting where not that many samples are needed on the finest levels.

\end{document}